\newtheorem*{theorem*}{Theorem}
\newtheorem{teo}{Theorem}[section]
\newtheorem{prop}[teo]{Proposition}
\newtheorem{ddef}[teo]{Definition}
\newtheorem{example}[teo]{Example}
\newtheorem{cor}[teo]{Corollary}
\newtheorem{lem}[teo]{Lemma}
\newtheorem*{cor*}{Corollary}
\newtheorem*{lem*}{Lemma}
\newtheorem*{teorA'}{Theorem A'}
\newtheorem*{fact*}{Fact}
\theoremstyle{definition}
\newcommand{\dd}{{\rm d}}
\newcommand{\dr}{\partial}
\newcommand{\C}{\mathbb{C}}
\newcommand{\R}{\mathbb{R}}
\newcommand{\Z}{\mathbb{Z}}
\newcommand{\Q}{\mathbb{Q}}
\newcommand{\A}{\mathcal{A}}
\newcommand{\D}{\mathcal{D}}
\newcommand{\F}{\mathcal{F}}
\newcommand{\G}{\mathcal{G}}
\newcommand{\LL}{{\mathcal L}}
\newcommand{\im}{{\rm Im}}
\newcommand{\re}{{\rm Re}}
\newcommand{\cl}[1]{\mbox{$\mathcal{#1}$}}
\newcommand{\ov}[1]{\mbox{$\overline{#1}$}}
\newcommand{\diff}{{\rm Diff}}
\newcommand{\holv}{{\rm Hol}^{\, vir}}
\newcommand*\xbar[1]{ %
   \hbox{ %
     \vbox{%
       \hrule height 0.3pt 
       \kern0.35ex
       \hbox{%
         \kern-0.1em
         \ensuremath{#1}%
         \kern-0.1em
       }%
     }%
   }%
}
\newcommand*\xxbar[1]{%
   \hbox{%
     \vbox{%
       \hrule height 0.3pt 
       \kern0.4ex
       \hbox{%
         \kern-0.1em
         \ensuremath{#1}%
         \kern-0.1em
       }%
     }%
   }%
}
\DeclareMathOperator{\sing}{Sing}
\begin{document}

\title{Holomorphic foliations tangent to Rolle-pfaffian hypersurfaces}

\author{Arturo Fern\'andez-P\'erez}
\address{Departamento de Matem\'atica ---  Universidade Federal de Minas Gerais}
\curraddr{Av. Ant\^onio Carlos 6627 --- 31270-901 --- Belo Horizonte, BRAZIL.}
\email{fernandez@ufmg.br}

\author{Rog\'erio   Mol}
\address{Departamento de Matem\'atica ---  Universidade Federal de Minas Gerais}
\curraddr{Av. Ant\^onio Carlos 6627 --- 31270-901 --- Belo Horizonte, BRAZIL.}
\email{rmol@ufmg.br}

\author{Rudy Rosas}
\address{Pontificia Universidad Cat\'olica del Per\'u}
\curraddr{Av. Universitaria 1801 --- Lima,
PERU.}
\email{rudy.rosas@pucp.edu.pe}

\subjclass[2020]{37F75, 32S65, 32V40}
\keywords{Holomorphic foliation, CR-manifold, Levi-flat variety}

\thanks{The first and second authors are supported by CNPq Projeto Universal 408687/2023-1 ``Geometria das Equações Diferenciais Algébricas". The first author is supported by CNPQ PQ 306011/2023-9.  Third author is supported by
 Vicerrectorado de investigaci\'on de la Pontificia Universidad Cat\'olica del Per\'u}

\begin{abstract}
In this paper we study germs of holomorphic foliations at $(\C^{2},0)$ tangent to pfaffian hypersurfaces --- integral
hypersurfaces of real analytic $1$-forms --- satisfying the Rolle-Khovanskii condition. This hypothesis leads us to conclude that
such a foliation is defined by a closed meromorphic $1$-form, also allowing the classification of the simple models in its reduction
of singularities.
\end{abstract}

\maketitle

\section{Introduction}

A germ of singular holomorphic foliation at $(\C^{n},0)$, $n \geq 2$, is the object $\G$ defined by a germ of holomorphic $1$-form $\eta$,
with relatively prime coefficients in the local ring of holomorphic functions $\cl{O}_{n}$, satisfying the integrability
condition, i.e. $\eta\wedge \dd \eta = 0$. The geometric picture of this definition is the following: in a small neighborhood
of $0 \in \C^{n}$, where $\eta$ has a realization as a holomorphic $1$-form, outside the variety of
zeros of its coefficients --- its \emph{singular set} $\sing(\omega)$ ---  the distribution of complex hyperplanes defined by $\eta$ produces a decomposition of the space in integral hypersurfaces of $\eta$ --- the \emph{leaves} of the foliation --- which are immersed holomorphic manifolds of codimension one.
In this neighborhood, we say that  $\G$ is \emph{tangent to} a set $V$ --- or, equivalently, that
$V$ is \emph{invariant by} $\G$ --- if, whenever $p \in  V \setminus \sing(\F)$, the leaf of $\G$ containing $p$ lies entirely in $V$. In other words,
$V \setminus \sing(\G)$ is ``foliated'' by the leaves of $\G$. Questions concerning the existence of invariant sets is of significance, especially
when some structure of $V$ is required. For instance, if $V$ is an invariant  real analytic variety of real codimension one,
 we say that $V$ is a real analytic \emph{Levi-flat hypersurface}.

It is  natural to enquire the extent to which an invariant set $V$ determines  the properties of a foliation $\G$.
A remarkable result in this line, by D. Cerveau and A. Lins Neto \cite{cerveau2011}, says that if the holomorphic foliation  $\G$ is tangent
to  a real analytic Levi-flat
hypersurface then $\G$ has a meromorphic   \emph{first integral}, that is, its leaves are contained in the level sets of a meromorphic function (see its statement in Theorem \ref{teo-cerveau-linsneto} below).
Inasmuch as the analytic case is understood, we propose  as a next step  to weaken the analyticity hypothesis on $V$.
Without abandoning it completely, our idea is to work with sets that
keep some traits of analyticity. In this context,  Rolle-pfaffian hypersurfaces appear in our study.

Let us give some definitions. A \emph{Pfaffian hypersurface} is a maximal integral solution $V \subset W$, of real codimension one, of a real analytic
$1$-form $\omega$ defined  around the closure of a connected semi-analytic open  set  $W \subset \R^{n}$, for some $n \geq 2$.
This $1$-form is not necessarily integrable. If it is integrable, which will always happen when $V$ is \emph{transcendent} (i.e. not semi-analytic)  it then defines a real analytic singular
foliation  whose leaves are all pfaffian hypersurfaces.

Pfaffian hypersurfaces are immersed analytic manifolds and, thus, may  display  indocile topological behavior. In order to
tame their topology, a condition can be added, which is an analogue of the popular Rolle theorem of Calculus.
 It is the so-called \emph{Rolle-Khovanskii condition}  \cite{khovanskii1984,khovanskii1991}: a pfaffian hypersurface $V$ is \emph{Rolle} if every analytic curve in $W$ joining two points of $V$ is somewhere tangent to $\omega$. Or, reformulating this: if every analytic curve transverse to $\omega$ intersects $V$
at most once.  We say, in this case, that $V$ is \emph{Rolle-pfaffian}.
If the $1$-form $\omega$ is integrable, we say that
it defines a \emph{Rolle foliation} if all its integral hypersurfaces have the Rolle-Khovanskii property and  we refer to their leaves as \emph{Rolle leaves}.

If $V$ is a transcendent Rolle-pfaffian hypersurface, then it intercepts a trivializing neighborhood of the foliation defined by $\omega$ in a single plaque.
Thus, $V$ is real analytic at each of its points. However, analyticity fails in
the singular points  of $\omega$ contained in the adherence of $V$.
The hypothesis of being Rolle, however, imposes  at these points      conditions of non-oscillation.
Rolle-pfaffian hypersurfaces also enjoy  finiteness intersection properties characteristic of real analytic varieties (see the Finiteness
Theorem \ref{teo-finiteness} below \cite{moussu-roche1992}). Loosely speaking, Rolle-pfaffian hypersurfaces are objects that
``look'' analytic.

We can then state our guiding question: what can be said of a germ of singular holomorphic foliation $\G$
at $(\C^{n},0)$ tangent to a germ of Rolle-pfaffian hypersurface $V$?
We are here considering that the germ of pfaffian hypersurface $V$ is \emph{Rolle} if its  realization in some semi-analytic neighborhood of the origin of $\C^{n}$,
viewed as its underlying real space, has the Rolle-Khovanskii property.
Aiming at this question, we follow a script described below.
First, we circumvent the case where  $V$ is semi-analytic, since in this situation $\G$ is totally understood by Cerveau-Lins Neto's theorem
quoted above. Hence, we admit $V$ to be transcendent, which implies that its subjacent $1$-form $\omega$ is integrable and
defines a real analytic foliation $\F$ of real codimension one. The transcendency of $V$ also entails the fact that this
$\F$ and the holomorphic foliation $\G$ are tangent everywhere. Such an $\F$ is said to be a \emph{Levi-flat foliation}, for its leaves are, locally, real analytic
Levi-flat hypersurfaces. In this situation the so-called \emph{Levi} (or \emph{kernel}) \emph{foliation} of $\F$ is the holomorphic foliation $\G$. This
arrangement is well described  in the paper \cite{mol-fernandez-rosas2020}: 
in this case, $\G$ is defined by a closed meromorphic $1$-form, and this fact enables the determination of the possible
expressions for $\omega$ (see Theorem \ref{teo-Levi-closed} below and the ensuing paragraph).

So far the Rolle-Khovanskii condition has not been invoked. We do it   and investigate how it affects the
singularities of $\G$, limiting our study to dimension $n=2$. Even though restrictive, the two dimensional case somehow
gives information on foliations in arbitrary dimensions, for  a great deal of their geometry
is read in two-dimensional sections.
In dimension two, a holomorphic foliation has simple models, given by \emph{simple} or \emph{reduced} singularities.
An arbitrary foliation can be brought to these simple models through a \emph{reduction of singularities}, a proper holomorphic map given
by the composition of finitely many  quadratic blow-ups
(see Subsection \ref{subsection-singular-foliations}). A simple singularity, by definition, is one
induced by a holomorphic
vector field with non-nilpotent linear part, having eigenvalue ratio $\lambda \in \C \setminus \Q_{+}$, that we call \emph{multiplier} of the singularity.
The results
 in Section \ref{sec-simples} allow
the following possibilities for a germ of simple foliation $\G$ tangent to a Rolle-pfaffian hypersurface: either $\G$ is linearizable
with $\lambda \in \R \setminus \Q^{+}$ or, when $\G$ is non-linearizable, it is
either a resonant ($\lambda \in \Q_{-}$)  or a saddle-node ($\lambda = 0$) singularity.  In both cases  they are analytically normalizable, meaning
that  their formal normal forms, given in equations \eqref{eq-saddle-node} and \eqref{eq-resonant}, are actually analytic. In the case of a saddle-node, we also have that its \emph{weak multiplier} ($\mu$ in equation \eqref{eq-saddle-node})
is rational. In all possible alternatives,
$\G$ will be tangent to a Rolle foliation. These results are proved in  Propositions \ref{prop-hyperbolic}, \ref{prop-elliptic}, \ref{prop-saddle-node-rolle} and \ref{prop-resonant-rolle}.

Finally, in Section \ref{sec-consequences}, we consider an arbitrary germ of holomorphic foliation   $\G$ at $(\C^{2},0)$.
An invariant Rolle-pfaffian hypersurface, if lifted by its reduction of singularities, need not in general spread along the
exceptional divisor that replaces the singularity $0 \in \C^{2}$. There might be barriers such as simple singularities with
$\lambda \in \R_{+}$ --- that engender invariant Rolle-pfaffian hypersurfaces called \emph{nodal separators} --- saddle-node singularities
and also non-invariant (\emph{dicritical}) components in the exceptional divisor. Thus, we are compelled to work with
the rather stronger hypothesis of $\G$ being tangent to a Rolle foliation. Under this latter condition, we assemble in the statement of
Theorem \ref{teo-nondic-pfaffian} the following characterization of $\G$: it is defined by a closed meromorphic $1$-form and the simple models are those described in the preceding paragraph.
A more specific description of $\G$ is obtained in  Theorem  \ref{teo-logarithmic}, where it is additionally assumed
that $\G$ is \emph{non-dicritical} (i.e. it has an invariant exceptional divisor), of  \emph{generalized curve type} (i.e.
without saddle-nodes in its reduction of singularities), and that it has some simple singularity with non-rational $\lambda$.
We can conclude, under these hypotheses,  that $\G$ is \emph{logarithmic}, i.e. defined by a close meromorphic $1$-form with simple poles, having real
residues.
If  $f_{1},\ldots,f_{k} \in \cl{O}_{2}$ are the poles of this $1$-form and
 $\lambda_{1},\ldots,\lambda_{k} \in \R^{*}$ are its residues, then
 $f_{1}^{\lambda_{1}} \cdots f_{k}^{\lambda_{k}}$ is a multiform first integral for $\G$ and
 the invariant Rolle-pfaffian hypersurfaces are the level sets of the function
$ |f_{1}|^{\lambda_{1}} \cdots |f_{k}|^{\lambda_{k}}$.

\section{Pfaffian hypersurfaces and the Rolle-Khovanskii property}

Let $W \subset \R^{n}$ be a connected semi-analytic open set and $\omega$ be a non-singular  $1$-form in $W$,
defined and real analytic in
some neighborhood of the closure  $\ov{W}$ in $\R^{n}$.
We say that a triple  $\{V,\omega,W\}$  is a \emph{pfaffian
hypersurface}   if $V \subset W$ is an immersed manifold of dimension $n-1$ which
 is an  \emph{integral variety} of $\omega$, that is $T_{p}V = \ker \omega(p)$ for all $p \in V$, and is \emph{maximal} with respect to this property.
We will simply say that $V$ is a pfaffian hypersurface when no explicit reference to the neighborhood $W$ or to the $1$-form $\omega$ is needed.
Actually, in this work we shall consider germs of real analytic $1$-forms at $0 \in \R^{n}$ whose singular
set is of real codimension at least two.
In this case,
we will suppose these germs  to be realized in  some
neighborhood of the compact $\ov{U}$, where $U$ is a small semi-analytic neighborhood   of $0 \in \R^{n}$ (for instance, $U$ can be taken to be a ball of small radius centered at $0 \in \R^{n}$). We then take
the semi-analytic open set $W = U \setminus \sing(\omega)$ and consider pfaffian
hypersurfaces of the form $\{V,\omega,W\}$. In this way, the notion of \emph{germ of pfaffian hypersurface} at $0 \in \R^{n}$ is well defined.

 We say that a pfaffian hypersurface $\{V,\omega,W\}$ is of \emph{Rolle} type if it satisfies  the Rolle-Khovanskii property:
 every analytic curve intersecting $V$ twice transversely must have an intermediate point of tangency with $\omega$.
 In other words, if
 $\gamma:[0,1] \to W$ is an analytic map  such that  $\gamma(0), \gamma(1) \in V$
then there exists $t_{0} \in [0,1]$ such that $\gamma'(t_{0}) \in \ker \{\omega(\gamma(t_{0}))\}$.
 This can be rephrased in the following manner: every analytic curve on $W$ transverse to $\omega$ intercepts $V$ at most once.
 In this case, we will say that $\{V,\omega,W\}$ --- or simply $V$, if the context makes clear who $\omega$ and $W$ are ---
 is a Rolle-pfaffian hypersurface.

 We have that the pfaffian hypersurface $\{V,\omega,W\}$ is  Rolle  in the following two cases:
 \begin{enumerate}[label=(\roman*)]
 \item if $V$ is a separating hypersurface in $W$ \cite{khovanskii1984};
 \item if $W$ is simply connected \cite{moussu-roche1992}.
 \end{enumerate}
 We say that $V$ is \emph{separating} if $W\setminus V$ has two connected components and $V$ is their common boundary in $W$.
For instance, if $V \subset W$ is the connected non-singular   level set of a  real-analytic function $F$ defined and non singular in $W$, then $V$ is pfaffian for the
 $1$-form $dF$. Besides, it is separating ---
 if $V = \{F=c\}$, then it separates the connected components
 $\{F<c\}$ and $\{F>c\}$ of $W$. Thus, by (i) above,  $V$ is Rolle for $\dd F$ or for any other real analytic $1$-form $\omega$ such that $\{V,\omega,W\}$ is a pfaffian
 hypersurface.
One such Rolle-pfaffian hypersurface will be called \emph{semi-analytic}.
A pfaffian hypersurface $\{V,\omega,M\}$ which is not semi-analytic will be called \emph{transcendent}.

Rolle-pfaffian hypersurfaces have their roots in the works of A. G. Khovanskii  \cite{khovanskii1984,khovanskii1991}.
They have many nice properties of analytic hypersurfaces,   particularly  some finiteness properties,
as described in the following theorem by R. Moussu and C. Roche
\cite{moussu-roche1992}:
 \begin{teo}[Finiteness Theorem]
 \label{teo-finiteness}
 If $X$ is a relatively compact semi-analytic set in $\R^{n}$ and $\{V_{i},\omega_{i},W\}$,
 $i=1,\ldots,k$, are Rolle-pfaffian  hypersurfaces,
 then the number of connected components of $X \cap V_{1} \cap \cdots \cap V_{k}$ is finite and bounded by a number  which depends only on $W$, $X$ and the
 pfaffian forms $\omega_{i}$.
 \end{teo}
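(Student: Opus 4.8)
The plan is to argue by induction on the dimension $n=\dim W$, nested with an induction on the number $k$ of Rolle-pfaffian hypersurfaces, the engine being the Rolle--Khovanskii property itself together with the classical fact that a relatively compact semi-analytic set has finitely many connected components (this is the case $k=0$). First I would reduce the problem to a manifold situation: fix a finite Whitney stratification of $X$ into connected real-analytic strata $S_{1},\ldots,S_{N}$, a decomposition whose number and incidence depend only on $X$. Since the number of connected components of $X \cap V_{1} \cap \cdots \cap V_{k}$ is at most the sum over the strata of the number of connected components of $S_{j} \cap V_{1} \cap \cdots \cap V_{k}$, it suffices to bound the latter for each smooth connected stratum $S_{j}$ and to control $N$ in terms of $X$ alone.

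The observation that drives the whole argument is that the Rolle--Khovanskii property is \emph{inherited by restriction}. If $S \subset W$ is a smooth connected submanifold, then $V_{i} \cap S$ is an integral variety of the restricted form $\omega_{i}|_{S}$, and it is again Rolle: any analytic curve lying in $S$ and transverse to $\omega_{i}|_{S}$ is, viewed in $W$, an analytic curve transverse to $\omega_{i}$, hence it meets $V_{i}$ --- and therefore $V_{i} \cap S$ --- at most once. Thus on each stratum we recover the same type of configuration, but now on a manifold of strictly smaller dimension when $S$ is not open, which is exactly what feeds the descent in $n$.

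The heart of the matter is a \emph{Khovanskii--Rolle counting lemma} describing how the component count grows when one further Rolle hypersurface is adjoined. Writing $Z = S \cap V_{1} \cap \cdots \cap V_{k-1}$, I want to bound the connected components of $Z \cap V_{k}$. The Rolle property forces any two adjacent components of $Z \cap V_{k}$ along $Z$ to be separated by a point where (a stratum of) $Z$ is tangent to the foliation defined by $\omega_{k}$; hence the number of connected components of $Z \cap V_{k}$ is bounded by the number of connected components of $Z$ plus the number of connected components of this tangency locus. The tangency locus is obtained from the defining data of $Z$ by adjoining the single analytic equation $\omega_{k}|_{TZ}=0$, so it is again a semi-Pfaffian set of the same kind but of strictly lower dimension; its component count is therefore already controlled by the inductive hypothesis. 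Iterating over $k$, and reducing the dimension at each tangency step and at each passage to a lower-dimensional stratum, closes the double induction. Because every estimate produced along the way is expressed through component counts of semi-analytic strata and of tangency loci --- quantities depending only on $W$, $X$ and the forms $\omega_{i}$, never on the particular leaves --- the resulting bound has the claimed uniform dependence.

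The main obstacle is precisely this counting lemma in the stratified, singular regime. One must check that the tangency loci are genuinely semi-Pfaffian of controlled inductive rank and of strictly lower dimension, that the Rolle property transfers correctly onto them, and that the singular strata of $Z$ --- where the hypersurfaces or the foliation degenerate --- contribute only finitely and uniformly. The delicate point is keeping the estimates \emph{independent of the individual leaves} $V_{i}$ rather than merely finite for each fixed configuration; this is where the full strength of Khovanskii's theory of Pfaffian sets \cite{khovanskii1984} and its semi-analytic refinement in \cite{moussu-roche1992} is required.
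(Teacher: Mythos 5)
A preliminary remark that frames the comparison: the paper does not prove Theorem \ref{teo-finiteness} at all --- it is quoted as a known result of Moussu and Roche \cite{moussu-roche1992}, so your attempt has to be measured against the Khovanskii--Moussu--Roche argument in the literature. Your sketch does reproduce its overall architecture correctly (double induction on dimension and on the number of Rolle hypersurfaces, stratification of $X$, descent to lower-dimensional ``contact'' sets determined by $W$, $X$ and the forms $\omega_i$ alone), so the skeleton is the right one.

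The genuine gap is in your counting lemma, which is asserted rather than proved, and whose one-line justification does not establish it. The Rolle--Khovanskii property, applied to an analytic curve $\gamma$ in $Z=S\cap V_1\cap\cdots\cap V_{k-1}$ joining two components of $Z\cap V_k$, yields a parameter $t_0$ with $\gamma'(t_0)\in\ker\omega_k(\gamma(t_0))$: a tangency of the \emph{curve} with $\omega_k$. As soon as $\dim Z\geq 2$ this is not a point of your tangency locus $\{p\in Z : \omega_k|_{T_pZ}=0\}$, i.e.\ $T_pZ\subset\ker\omega_k(p)$; the curve can be tangent to $\omega_k$ at points where $Z$ is perfectly transverse to $\ker\omega_k$, and such points do not sweep out any fixed lower-dimensional analytic set. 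Bridging exactly this gap is the hard content of \cite{khovanskii1984,moussu-roche1992}: one bounds components of $Z\cap V_k$ through critical points of a generic auxiliary function (a linear form or a distance function) restricted to $Z\cap V_k$, these critical points lie in an analytic set built from the $\omega_i$'s and the differential of that function, and compact versus non-compact components must be treated separately, using crucially that Rolle leaves are \emph{closed} (a fact the paper proves at the end of Section 2 and which you never invoke; without it, components of $Z\cap V_k$ could accumulate on one another and no separation argument can start). Two further unaddressed degeneracies undermine the induction as you set it up: the tangency locus need not drop dimension (if a stratum $S$ lies inside a leaf of $\omega_k$, then $\omega_k|_S\equiv 0$, the locus is all of $S$, and your ``restriction is Rolle'' argument becomes vacuous since $S$ carries no curves transverse to $\omega_k|_S$); and $T_pZ$ need not equal $T_pS\cap\ker\omega_1(p)\cap\cdots\cap\ker\omega_{k-1}(p)$ when the intersection is not transverse, so $V_i\cap S$ is in general not a hypersurface of $S$ and the inductive configuration is \emph{not} literally ``the same type'' one dimension down --- this is precisely why Moussu--Roche work in an enlarged category of semi-pfaffian sets with controlled complexity rather than with hypersurfaces only.
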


The Rolle property for pfaffian hypersurface germifies: we say that a germ of pfaffian hypersurface $V$ at $0 \in \R^{n}$, with defining $1$-form $\omega$,
 is Rolle if there exists a semi-analytic neighborhood   of $0 \in \R^{n}$, with $\omega$  realized in
a neighborhood of  $\ov{U}$, such that, setting $W = U \setminus \sing(\omega)$, the triple $\{V,\omega,W\}$ has the Rolle-Khovanskii property.
It is easy to see that if this property holds for $U$ it will also hold for any semi-analytic neighborhood $U'$ of $0 \in \R^{n}$ such
that $U' \subset U$.

Let $\omega_{1},\ldots,\omega_{\ell}$ be $1$-forms defined in an open set  $W \subset \R^{n}$. We say that an immersed manifold
$L \subset W$ is \emph{invariant} by the pfaffian system $\{\omega_{1},\ldots,\omega_{\ell}\}$ if, for every $p \in W$,
$T_{p} L \subset \ker \omega_{1}(p) \cap \cdots \cap \ker \omega_{\ell}(p)$.
We have the following elementary fact:
\begin{prop}
 \label{prop-integrable}
  Let $W \subset \R^{n}$ be an  open set and $\omega_{1},\ldots,\omega_{\ell}$, where $\ell < n$, be  real-analytic $1$-forms on $W$. Let $L$ be
an $(n-\ell)$-dimensional immersed submanifold of $W$ invariant by the pfaffian system $\{\omega_{1},\ldots,\omega_{\ell}\}$.
Then, for any fixed $j$, with $1\leq j \leq \ell$, the $(\ell +2)$-form
\[ \dd \omega_{j} \wedge \omega_{1} \wedge \cdots \wedge  \omega_{\ell} \]
vanishes over $L$.
 \end{prop}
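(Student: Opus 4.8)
The plan is to prove the stronger \emph{pointwise} assertion that the $(\ell+2)$-covector $\big(\dd\omega_{j}\wedge\omega_{1}\wedge\cdots\wedge\omega_{\ell}\big)(p)$ vanishes in $\Lambda^{\ell+2}T_{p}^{*}W$ for every $p\in L$; this is what ``vanishes over $L$'' should mean, and it is what later permits deducing genuine integrability of $\omega$ (the zero set of this analytic form then contains the transcendent $V$, hence must be everything). I fix $p\in L$ and argue by pure linear algebra in $T_{p}^{*}W$, distinguishing two cases according to the rank of $\omega_{1}(p),\ldots,\omega_{\ell}(p)$. If these covectors are linearly dependent (in particular if some $\omega_{i}(p)=0$), then $\omega_{1}\wedge\cdots\wedge\omega_{\ell}(p)=0$ already, and the whole $(\ell+2)$-form is zero at $p$ for free.

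The substantive case is when $\omega_{1}(p),\ldots,\omega_{\ell}(p)$ are linearly independent. Then $D_{p}:=\ker\omega_{1}(p)\cap\cdots\cap\ker\omega_{\ell}(p)$ has dimension exactly $n-\ell$; since invariance gives $T_{p}L\subseteq D_{p}$ and $\dim T_{p}L=n-\ell$ by hypothesis, I obtain $T_{p}L=D_{p}$. Writing $\iota:L\hookrightarrow W$ for the inclusion, the invariance hypothesis is precisely $\iota^{*}\omega_{i}=0$ for all $i$, and since exterior differentiation commutes with pullback, $\iota^{*}\dd\omega_{j}=\dd(\iota^{*}\omega_{j})=0$. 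Reading this off on tangent vectors, $\dd\omega_{j}(u,v)=0$ for all $u,v\in T_{p}L=D_{p}$.

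It remains to pass from this vanishing of $\dd\omega_{j}$ on $D_{p}\times D_{p}$ to the vanishing of the full wedge, which I regard as the only non-formal step and the main obstacle. This is the algebraic Cartan (exterior-division) lemma: if $\omega_{1}(p),\ldots,\omega_{\ell}(p)$ are independent and a $2$-covector $\Theta$ restricts to zero on $D_{p}$, then $\Theta$ lies in the ideal generated by $\omega_{1}(p),\ldots,\omega_{\ell}(p)$, whence $\Theta\wedge\omega_{1}(p)\wedge\cdots\wedge\omega_{\ell}(p)=0$. One proves it by completing $\omega_{1}(p),\ldots,\omega_{\ell}(p)$ to a basis of $T_{p}^{*}W$: the hypothesis $\Theta|_{D_{p}}=0$ kills exactly the basis components of $\Theta$ having both indices outside $\{1,\ldots,\ell\}$, and each surviving term acquires a repeated factor upon wedging with $\omega_{1}\wedge\cdots\wedge\omega_{\ell}$. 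Applying this with $\Theta=\dd\omega_{j}(p)$ closes the independent case and hence the proposition. I note that the statement is essentially the ``only if'' half of the Frobenius integrability criterion, localized along a single integral manifold, so the only genuine content beyond the identity $\dd\circ\iota^{*}=\iota^{*}\circ\dd$ and a dimension count is this division lemma.
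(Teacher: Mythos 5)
Your proof is correct, but it takes a genuinely different route from the paper's. The paper argues in adapted analytic coordinates: around a point $p \in L$ it flattens the plaque of $L$ to $\{x_{1} = \cdots = x_{\ell} = 0\}$, uses invariance (together with the fact that an analytic function vanishing on this plaque lies in the ideal generated by $x_{1},\ldots,x_{\ell}$) to write $\omega_{j} = \sum_{i\leq\ell} A^{j}_{i}\,\dd x_{i} + x_{i}\,\eta^{j}_{i}$ with the $\eta^{j}_{i}$ involving only $\dd x_{\ell+1},\ldots,\dd x_{n}$, and then computes the wedge explicitly, finding $\dd\omega_{j}\wedge\omega_{1}\wedge\cdots\wedge\omega_{\ell} = \sum_{i}x_{i}\zeta_{i}$, which vanishes where the $x_{i}$ do. You replace this computation by pointwise linear algebra: the identity $\iota^{*}\dd\omega_{j} = \dd(\iota^{*}\omega_{j}) = 0$, the dimension count $T_{p}L = D_{p}$ (valid when the $\omega_{i}(p)$ are independent), and the exterior-division lemma, with the dependent case disposed of trivially. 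Both arguments establish the same pointwise vanishing of the $(\ell+2)$-covector along $L$, which is exactly what Corollary \ref{cor-transcendent} and Proposition \ref{prop-levi-flat-foliation} need. Your version is coordinate-free at the key step, requires no flattening of $L$ and no analyticity (it works for $C^{1}$ forms), and isolates the Frobenius-type division lemma as the one piece of real content; the cost is the case split on independence, which the paper's single uniform computation avoids (its formula holds whether or not the $\omega_{i}(p)$ are independent), at the price of choosing coordinates and invoking ideal membership for analytic functions vanishing on a coordinate subspace.
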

\begin{proof}
The problem is local. Fix $p \in L$ and take a small neighborhood $U$ of $p$ and   analytic coordinates $(x_{1},\ldots,x_{n}) \in \R^{n}$
such that $p$ is the origin and  the connected component of $L \cap U$ containing $p$ has equations $x_{1} = \ldots = x_{\ell} = 0$.
The invariance condition of $L$    gives that, for $j=1,\ldots,\ell$, we can write, in a small neighborhood of
$0 \in \R^{n}$,
\[ \omega_{j} = \sum_{i=1}^{\ell} A^{j}_{i} \dd x_{i} +  x_{i} \eta^{j}_{i} ,\]
where the $A^{j}_{i}$ are functions and the
$\eta^{j}_{i}$ are $1$-forms involving only $\dd x_{\ell+1}, \ldots, \dd x_{n}$, real analytic near the origin.
Thus, for $j=1,\ldots,\ell$,
\begin{equation}
\label{eq-domega}
  \dd \omega_{j} = \sum_{i=1}^{\ell}\left( \dd A^{j}_{i} - \eta^{j}_{i} \right) \wedge \dd x_{i}    +  \sum_{i=1}^{\ell} x_{i}  \dd \eta^{j}_{i}.
\end{equation}
On the other hand
\begin{equation}
\label{eq-wedgeomega}
\omega_{1} \wedge \cdots \wedge  \omega_{\ell} = \Phi \dd x_{1} \wedge \cdots \wedge   \dd x_{\ell} + \sum_{i=1}^{\ell} x_{i} \theta_{i} ,
\end{equation}
where $\Phi$ is a function ---  the determinant of the  $\ell \times \ell$ matrix $(A^{j}_{i})$ --- and the $\theta_{i}$'s are $\ell$-forms.
It is apparent that the wedge of the first sum to the right of \eqref{eq-domega} with the first term to the right of  \eqref{eq-wedgeomega} vanishes, and we find
\[ \dd \omega_{j} \wedge \omega_{1} \wedge \cdots \wedge \omega_{\ell} =  \sum_{i=1}^{\ell} x_{i} \zeta_{i}, \]
where the $\zeta_{i}$'s are real analytic  $(\ell+2)$-forms. This proves the proposition.
\end{proof}

Proposition \ref{prop-integrable} has the following consequence:

\begin{cor}
\label{cor-transcendent}
If the pfaffian hypersurface $\{V,\omega,W\}$ is transcendent, then $\dd \omega \wedge \omega \equiv 0$ on $W$.
\end{cor}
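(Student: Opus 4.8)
The plan is to deduce the corollary from Proposition \ref{prop-integrable} applied with $\ell = 1$, taking the single $1$-form $\omega_{1} = \omega$ and the integral manifold $L = V$, which is $(n-1)$-dimensional and invariant by $\omega$ by the very definition of a pfaffian hypersurface. The proposition then yields that the $3$-form $\dd\omega \wedge \omega$ vanishes identically over $V$. The whole difficulty is therefore to propagate this vanishing from $V$ to all of $W$, and here is precisely where the transcendency hypothesis must enter: without it the conclusion is false (a semi-analytic integral variety of a non-integrable $\omega$ would be a counterexample). Note that for $n = 2$ the $3$-form $\dd\omega\wedge\omega$ vanishes for trivial dimensional reasons, so one may assume $n \geq 3$.

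First I would introduce the real-analytic set $Z = \{ p \in W : (\dd\omega \wedge \omega)(p) = 0 \}$, the common zero locus of the real-analytic coefficient functions of the $3$-form $\dd\omega\wedge\omega$. By the previous paragraph $V \subset Z$. Arguing by contradiction, suppose $\dd\omega\wedge\omega \not\equiv 0$ on $W$. Since $W$ is connected, $Z$ is then a \emph{proper} real-analytic subset of $W$, hence nowhere dense and of dimension at most $n-1$.

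Now I would use that $V$ is $(n-1)$-dimensional to pin down its position inside $Z$. Because $V \subset Z$ and $\dim V = n-1$, the manifold $V$ must lie in the union of the $(n-1)$-dimensional components of $Z$; let $Z_{\mathrm{reg}}$ denote the locus where $Z$ is a smooth real-analytic hypersurface, so that $Z \setminus Z_{\mathrm{reg}}$ is a lower-dimensional analytic set. At a point $p \in V \cap Z_{\mathrm{reg}}$ the manifold $V$ and the analytic hypersurface $Z_{\mathrm{reg}}$ are both $(n-1)$-dimensional with $V \subset Z_{\mathrm{reg}}$ near $p$, so $V$ is \emph{open} in $Z_{\mathrm{reg}}$ there. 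On such an open piece one has $T Z_{\mathrm{reg}} = \ker \omega$; since this is an analytic condition (recall $\omega$ is nonsingular on $W$), it persists on the entire connected component of $Z_{\mathrm{reg}}$ through $p$, which is therefore an integral manifold of $\omega$. The maximality of $V$ then forces this component to be contained in $V$. Consequently $V$ is a union of connected components of $Z_{\mathrm{reg}}$, hence a semi-analytic set — contradicting the assumption that $V$ is transcendent. Therefore $\dd\omega\wedge\omega \equiv 0$ on $W$.

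The step I expect to be the main obstacle is the last one: converting the inclusion $V \subset Z$ into genuine semi-analyticity of $V$. The delicate point is that an $(n-1)$-dimensional immersed manifold sitting inside an $(n-1)$-dimensional analytic set need not, a priori, be a semi-analytic set — it could in principle be an ``irregular'' open subset of $Z_{\mathrm{reg}}$. This is resolved by combining two facts: the analyticity of the integrability condition $T Z_{\mathrm{reg}} = \ker\omega$, so that a single plaque of $V$ forces a whole component of $Z_{\mathrm{reg}}$ to be integral, and the maximality of $V$ as an integral variety, so that $V$ engulfs such components rather than stopping inside them. I would state these two facts carefully, since they are what genuinely uses both the analytic nature of $\omega$ and the defining maximality property of a pfaffian hypersurface.
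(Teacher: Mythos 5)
Your skeleton is exactly the paper's: apply Proposition \ref{prop-integrable} with $\ell=1$ to conclude that $\dd\omega\wedge\omega$ vanishes on $V$, then use transcendency to propagate the vanishing to all of $W$. The difference is that the paper dispatches the propagation in a single sentence --- the coefficients of $\dd\omega\wedge\omega$ are real-analytic functions vanishing on a transcendent set of real codimension one, hence they vanish identically --- that is, it invokes as immediate precisely the fact you undertake to prove: a transcendent pfaffian hypersurface cannot be contained in a proper real-analytic subset of $W$. Your instinct to actually prove this (zero locus $Z$, its regular locus $Z_{\mathrm{reg}}$, analytic continuation of the integrality condition along a component of $Z_{\mathrm{reg}}$, maximality of $V$ to swallow that component) is sound, and those middle steps do work: the tangency condition is the vanishing of the pullback of $\omega$ to $Z_{\mathrm{reg}}$, an analytic condition that propagates over a connected component, and the maximality step is legitimate because $V\cap C$ is open in both $V$ and $C$, so $V\cup C$ is again an integral hypersurface.

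However, your final deduction has a genuine soft spot. From $V\subset Z$ you cannot conclude $V\subset Z_{\mathrm{reg}}$: an $(n-1)$-dimensional manifold inside $Z$ may perfectly well pass through singular points of $Z$, for instance where two smooth sheets of $Z$ cross and $V$ travels along one of them. Such points of $V$ lie in no connected component of $Z_{\mathrm{reg}}$, so the sentence ``consequently $V$ is a union of connected components of $Z_{\mathrm{reg}}$'' does not follow as stated. What your argument actually yields is that $V\cap Z_{\mathrm{reg}}$ is a union of components of $Z_{\mathrm{reg}}$ (hence semi-analytic, by local finiteness of the family of components), while the leftover $V\cap(Z\setminus Z_{\mathrm{reg}})$ is only known to be a closed, nowhere dense subset of $V$ contained in a set of dimension at most $n-2$; an arbitrary subset of a lower-dimensional set need not be semi-analytic, so the contradiction with transcendency is not yet reached. (A second, minor, inaccuracy: for a real-analytic set $Z$, the non-manifold locus $Z\setminus Z_{\mathrm{reg}}$ is in general only semi-analytic, not analytic.) Closing this requires an additional argument for the points of $V$ on the singular locus of $Z$ --- exactly the kind of real-analytic delicacy that the paper's own one-line proof passes over in silence, but since you identified this step as the crux of the corollary, it is the part that still needs to be completed.
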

\begin{proof} It suffices to apply the proposition for
$\ell = 1$ and $\omega = \omega_{1}$. We have $\omega \wedge \dd \omega = 0$ over $V$. Thus, the coefficients  of
 $\dd \omega \wedge \omega$ are   real-analytic functions vanishing over  $V$, a transcendent set of real codimension one on $W$. Hence, all
 these coefficients vanish everywhere, which proves the corollary.
\end{proof}
Let $\omega$ be a germ of real analytic $1$-form at $0 \in \C^{n}$, whose coefficients have no non-trivial common factor in $\A_{n \R}$, the ring of real analytic functions with real values at $(\C^{n},0)$. If
 $\omega \wedge \dd \omega =0$
we say   that $\omega$ defines a germ \emph{singular real analytic foliation} $\F$
whose \emph{singular set} is $\sing(\F) := \sing(\omega)$. A foliation is defined up to multiplication by a unity in $\A_{n \R}$.
In a small neighborhood of the origin where
the germ $\omega$ is realized, it defines
a real analytic foliation of real codimension one outside $\sing(\omega)$.
It follows  from   Corollary \ref{cor-transcendent}   that a germ of transcendent pfaffian hypersurface $V$ defines a germ of real analytic foliation $\F$ of codimension one,
by eventually cancelling the common factors of the germ of its defining $1$-form.
Observe that two real analytic $1$-forms $\omega$ and $\omega'$, both having coefficients without non-trivial common factors,  defining,
 in a neighborhood of $0 \in \C^{n}$, the same distribution of hyperplanes
outside $\sing(\omega) \cup \sing(\omega')$, differ one from the other by the multiplication of a unity in $\A_{n \R}$.
This remark   gives as a consequence the fact  that the foliation $\F$ defined by the  transcendent pfaffian hypersurface $V$ is intrinsic.
Indeed, it follows from transcendency that two germs of real analytic $1$-forms $\omega$ and $\omega'$, having coefficients without non-trivial common factors, if tangent to $V$ satisfy $\omega \wedge \omega' \equiv 0$ and, hence, define the same germ of real analytic foliation.
If all leaves of the foliation $\F$
satisfy the Rolle-Khovanskii property, we
will say that $\F$ is a \emph{Rolle foliation} and that its leaves are
\emph{Rolle leaves}.

We finish this section by observing that if the pfaffian hypersurface $\{V,\omega,W\}$ is  Rolle then $V$ is closed in $W$. This is evident if $V$ is semi-analytic.
On the other hand, if $V$ is transcendent, $\omega$ defines a real-analytic foliation  $\F$. If $p \in W \setminus V$ is an accumulation point
of $V$, then,  in a trivialization of $\F$ around $p$,  any  analytic curve transverse to $\F$ at $p$ will meet $V$ infinitely many times,
saying that $V$ is not Rolle. In particular, all leaves of a Rolle foliation are closed.

\section{Holomorphic foliations and Levi-flat hypersurfaces}

\subsection{Singular holomorphic foliations}
\label{subsection-singular-foliations}
We recall here some basic concepts of the theory of holomorphic foliations.
They can be found in several  texts, for instance \cite{canocerveau2013}.
A germ of singular holomorphic foliation of codimension one at $(\C^{n},0)$ is the object
defined, algebraically, by a germ of holomorphic $1$-form $\eta$ at $0 \in \C^{n}$, having a singular set of codimension at least two
and satisfying the integrability condition, i.e. $\eta \wedge \dd \eta = 0$, all this up to multiplication
by a non-unity function in $\cl{O}_{n}$.
Geometrically, this corresponds to the attribution, in a small neighborhood of
$0 \in \C^{n}$ where $\eta$ is realized, of an  analytic set corresponding to singular set $\sing(\eta)$ and of
a structure of \emph{leaves} of a regular holomorphic foliation, corresponding to  the   integral hypersurfaces  of $\eta$.
If $\G$ denotes the singular holomorphic foliation given by $\eta$, its \emph{singular set} is defined as $\sing(\G) := \sing(\eta)$.
Hereafter, as a convention, we will omit the word ``singular'' in the expression  ``singular holomorphic foliation''.

A germ of closed meromorphic $1$-form $\tau$ defines a holomorphic foliation $\G$, by taking $\eta= \varphi \tau$, where $\varphi$ is a germ of meromorphic function
at $(\C^{n},0)$
that cancels the components of zeros and poles of $\tau$.
If the polar set of $\tau$ has irreducible
components with equations $f_{1},\ldots,f_{k} \in \cl{O}_{n}$, then it can be written as
\begin{equation}
\label{eq-closed-form}
\tau = \sum_{i=1}^{k} \lambda_{i} \frac{\dd f_{i}}{f_{i}} + \dd \left( \frac{h}{f_{1}^{m_{1}-1} \cdots f_k^{m_{k}-1} } \right),
\end{equation}
where $ h \in \cl{O}_{n}$ and, for each $i=1,\ldots, k$, we have that $\lambda_{i} \in \C$ and $m_{i}$ is the order of the pole defined by $f_{i}$
\cite{cerveau1982}.
A particular case is when $\G$ has a \emph{meromorphic first integral}, which means that the leaves of $\G$ are contained in
the level sets of a meromorphic function $H$ (we include here the possibility of $H$ being holomorphic). In this case, $\tau = \dd H$ is a closed meromorphic $1$-form defining $\G$.
The following simple remark will be used later: if the holomorphic foliation $\G$ is defined by the closed holomorphic $1$-form $\tau$
and does not admit a meromorphic first integral, then any other closed meromorphic $1$-form defining $\G$ is written as
$\alpha \tau$, for some $\alpha \in \C^{*}$.

Our main results concern  holomorphic foliations  in dimension two, in which case the integrability condition always applies.
When $n=2$, there are ``simple'' models for holomorphic foliations:
 $\G$ has a \emph{simple singularity}  if
it is induced by a vector field whose linear part is non-nilpotent and,
when its eigenvalues $\lambda_{1}, \lambda_{2}$ are both  non-zero, they satisfy $\lambda = \lambda_{1} / \lambda_{2} \not\in \Q_{+}$. This latter type of singularity is said to be \emph{non-degenerate}. When exactly one eigenvalue is zero, the singularity
is called \emph{saddle-node}.
Non-degenerate singularities are classified as \emph{hyperbolic}, \emph{elliptic} or \emph{resonant},
corresponding to the cases $\lambda \in \C \setminus \R$, $\lambda \in \R \setminus \Q$ or $\lambda \in \Q_{-}$. We will refer to
this $\lambda$ as the \emph{multiplier} of the singularity.
A germ of holomorphic foliation with a non-degenerate simple singularity is \emph{linearizable} if
 there are local holomorphic coordinates $(x,y)$ at $(\C^{2},0)$ in which it is induced by a $1$-form of the
type $\eta = x \dd y - \lambda y  \dd x$.

A \emph{separatrix} is an irreducible formal curve invariant by the holomorphic foliation. A simple singularity
has exactly two separatrices, both smooth and transverse to each other, corresponding to the eigenspaces of two eigenvalues.
In the non-degenerate case, both separatrices are analytic. In the saddle-node case, the one corresponding
to the non-zero eigenvalue is analytic --- the \emph{strong separatrix} --- whereas the one associated with
the zero eigenvalue can be a purely formal curve --- the \emph{weak separatrix}.

In dimension two, there exists a \emph{reduction of singularities} for a holomorphic foliation $\G$  \cite{seidenberg1968,camacho1982}.
This means that there exists
a proper holomorphic map
$\pi: (\tilde{M},\D) \to (\C^{2},0)$ --- a composition of quadratic blow-ups, where $\D = \pi^{-1}(0)$
is a divisor consisting of a finite union projective lines with normal crossings and $\tilde{M}$ is a germ of surface
around $\D$ --- which
 transforms $\G$ into a foliation
$\tilde{\G} =\pi^{*} \G$  defined in a neighborhood of $\D$ on $\tilde{M}$, having only
simple singularities. Besides,  $\pi$  desingularizes
 the set of separatrices, meaning that the separatrices of $\G$ are taken by $\pi$ into disjoint smooth curves, all of them transverse
 to the exceptional divisor $\D$.
A reduction of singularities  is not unique, but we can fix a minimal
reduction of singularities, which is unique, up to  isomorphism.

A holomorphic foliation $\G$, in dimension $n=2$,  is \emph{non-dicritical} if, given a reduction of singularities as above, all irreducible components of $\D$ are invariant by $\tilde{\G}$. Otherwise,   it is said to be \emph{dicritical}. A non-invariant component of $\D$ of a dicritical foliation is fibered by leaves of $\tilde{\G}$ which, projected
 by $\pi$, give rise to infinitely many separatrices of $\G$. On the other hand, a non-dicritical foliation
 has a finite number of separatrices. They correspond to separatrices, not contained
 in the exceptional divisor $\D$, of the simple singularities of $\tilde{\G}$.

\subsection{Levi-flat foliations}

The main references for this subsection are \cite{cerveau2004} and \cite{mol-fernandez-rosas2020}.
Let $U \subset \C^{n}$ be an open subset and
 $H \subset U$ be a real analytic submanifold of real codimension one.
For each   $p \in H$, there is a unique complex vector space of dimension $n-1$ contained in the tangent space $T_{p}H$, which is  given by $\cl{L}_{p} = T_{p}H \cap i T_{p}H$, where $i = \sqrt{-1}$.
When the real analytic distribution of complex hyperplanes $p \in H \mapsto \cl{L}_{p}$  is integrable, in the sense of Frobenius, we say that $H$ is a {\em Levi-flat} hypersurface.
Thus, $H$ is foliated by   immersed complex manifolds of complex dimension $n-1$, defining the so-called \emph{Levi foliation}.
This foliation, in principle, does not extend to a holomorphic foliation in a neighborhood of $0 \in \C^{n}$. If this happens,
this extension is unique and
the foliation   is characterized by the following result:
\begin{teo}[Cerveau-Lins Neto,\cite{cerveau2011}]
\label{teo-cerveau-linsneto}
Let $\G$ be a germ  of holomorphic foliation of   codimension one at $(\C^{n},0)$
 tangent to a germ of
 real analytic Levi-flat hypersurface. Then $\G$ admits a meromorphic first integral.
\end{teo}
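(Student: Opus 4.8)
The plan is to convert the geometric tangency of $\G$ to $H$ into an algebraic identity by complexifying a real analytic defining function of $H$, thereby producing a whole family of \emph{complex} analytic hypersurfaces invariant by $\G$, and then to invoke the Darboux–Jouanolou principle that a codimension one foliation carrying infinitely many invariant hypersurfaces admits a meromorphic first integral. First I would fix a reduced real analytic germ $\varphi$ with $H = \{\varphi = 0\}$ and a holomorphic $1$-form $\eta$ defining $\G$. At a point $p \in H$ where $\dd\varphi(p)\neq 0$, the Levi distribution is the $(1,0)$-part of the real tangent hyperplane, namely $\cl{L}_p = \ker(\dr\varphi)(p)$. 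Since $\G$ is tangent to $H$, the complex hyperplane $\ker\eta(p)$ lies in $T_pH$, and being complex it is contained in $\cl{L}_p$; comparing dimensions gives $\ker\eta(p) = \ker(\dr\varphi)(p)$ for all such $p$. Equivalently, $\eta \wedge \dr\varphi$ vanishes identically on $H$.

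Next I would complexify. Writing $\varphi = \varphi(z,\ov{z})$, let $\Phi(z,w)$ be the holomorphic germ at $0 \in \C^{n}_z\times\C^{n}_w$ with $\Phi(z,\ov z) = \varphi$ and satisfying the reality relation $\ov{\Phi(z,w)} = \Phi(\ov w,\ov z)$. Under complexification $\dr\varphi$ becomes $\dd_z\Phi := \sum_{j}\frac{\dr\Phi}{\dr z_j}\,\dd z_j$, and the identity $\eta\wedge\dr\varphi \equiv 0$ on $\{\varphi=0\}$ turns into $\eta\wedge \dd_z\Phi \equiv 0$ on the complex hypersurface $\HH = \{\Phi=0\}\subset\C^{2n}$. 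Since (after discarding multiplicities) $\Phi$ generates the ideal of $\HH$, this produces a holomorphic $2$-form $\Theta$ with $\eta\wedge\dd_z\Phi = \Phi\,\Theta$. Now I freeze the second variable: for $c$ near $0$ the slice $S_c := \{z : \Phi(z,c) = 0\}$ is a germ of complex hypersurface, and restricting the identity to $\{w=c\}$ gives $\eta\wedge\dd_z\Phi(\cdot,c) \equiv 0$ on $S_c$, so that \emph{every} $S_c$ is invariant by $\G$. The reality relation shows $z\in H \Rightarrow \Phi(z,\ov z)=0 \Rightarrow z\in S_{\ov z}$, so this family genuinely reproduces $H$.

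Finally I would show the family contains infinitely many distinct members. If all $S_c$ coincided with a single germ $S$, then $H\subset S$; but $H$ has real dimension $2n-1$ while a complex hypersurface $S$ has real dimension $2n-2$, a contradiction reflecting that $\Phi$ depends effectively on $w$ (i.e. $H$ is not complex analytic). Hence $c\mapsto S_c$ is nonconstant, and by analyticity its image is infinite. One then uses that the closed logarithmic forms $\dd_z\Phi(\cdot,c)/\Phi(\cdot,c)$, each with simple pole along $S_c$ and with $\eta\wedge\bigl(\dd_z\Phi(\cdot,c)/\Phi(\cdot,c)\bigr)$ holomorphic, organize $\G$ into a Darboux configuration, so that $\eta$ is proportional to a closed meromorphic $1$-form and $\G$ acquires a meromorphic first integral \cite{cerveau1982}. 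I expect the main obstacle to lie precisely at this last passage: verifying rigorously that the complexified slices yield infinitely many \emph{pairwise distinct} invariant hypersurfaces — ruling out that different parameters $c$ give the same germ — and then extracting the meromorphic first integral from such an infinite family (via reduction of singularities and the dicritical/non-dicritical dichotomy, or the general structure theory of integrable holomorphic forms).
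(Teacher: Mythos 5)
First, a framing remark: the paper does not prove this statement at all --- it is quoted as an external result of Cerveau--Lins Neto \cite{cerveau2011} --- so your proposal must be measured against the proof in that reference, not against anything in the present text.

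Your first two paragraphs are essentially correct and in fact reproduce the opening move of the actual proof: the pointwise identity $\ker\eta = \ker(\dr\varphi)$ on the smooth part of $H$, its complexification $\eta\wedge \dd_z\Phi = \Phi\,\Theta$, and the resulting invariance of the Segre-type slices $S_c$ are all sound (modulo the standard care needed for reducedness of $\Phi$ and degenerate parameters $c$), and they do produce an uncountable family of complex hypersurfaces invariant by $\G$, one through each point of $H$. The genuine gap is the final step: there is no ``local Darboux--Jouanolou principle''. For germs at $(\C^{2},0)$ it is false that infinitely many --- even uncountably many --- invariant analytic hypersurfaces in a fixed neighborhood force a meromorphic first integral. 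Every dicritical germ has infinitely many separatrices, and Suzuki's classical example (cited for exactly this purpose by Cerveau--Lins Neto and by Klughertz) is a dicritical germ with uncountably many closed invariant analytic curves and no meromorphic first integral; Klughertz's theorem requires \emph{all} leaves to be closed precisely because a large infinite family is not enough. The reason the global arguments do not localize is structural: Darboux's and Jouanolou's proofs rest on the finite dimensionality of spaces of polynomial $1$-forms of bounded degree (equivalently, finiteness of the relevant divisor class group), whereas $\cl{O}_{2}$ is infinite dimensional and every curve germ is a principal divisor, so the counting argument that converts ``many invariant hypersurfaces'' into a linear relation among logarithmic derivatives evaporates.

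Consequently, what you dismiss as ``this last passage'' is the actual content of \cite{cerveau2011}, and it cannot be handled softly. The published proof exploits not the cardinality but the \emph{position} of the family (a real hypersurface's worth of closed leaves, one through each point of $H$): one performs a reduction of singularities, classifies which simple singularities can be tangent to a Levi-flat hypersurface (ruling out hyperbolic and saddle-node models --- the analysis in Section 4 of the present paper is a Rolle-pfaffian analogue of this step), shows that the (virtual) holonomy groups of the divisor components are finite --- using that a subgroup of $\diff(\C,0)$ with uncountably many finite pseudo-orbits accumulating at the origin must be finite, hence linearizable --- and then reconstructs the first integral via Mattei--Moussu type arguments \cite{mattei1980} in the non-dicritical case, with a separate argument in the dicritical case. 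So your proposal is correct up to the construction of the invariant family, but the bridge from that family to the meromorphic first integral is missing, and the particular bridge you name (Darboux--Jouanolou) provably does not exist in the local setting.
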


If the leaves of a real analytic foliation  $\F$ defined by an integrable real analytic $1$-form $\omega$, in a neighborhood $U$ of $0 \in \C^{n}$, are, locally,  Levi-flat hypersurfaces, we say that  $\F$ ---
or   $\omega$ --- is   \emph{Levi-flat}. In this case, there exists a real analytic   foliation $\LL = \LL(\F)$ on $U$ whose leaves are immersed complex manifolds of dimension $n-1$, entirely contained in the leaves of $\F$. We also refer to this foliation as \emph{Levi foliation}, or, to avoid confusion, as \emph{kernel foliation}.
It is worth emphasizing that, although the leaves of $\LL$   are complex manifolds,
 as a foliation, it has in principle real analytic regularity.

The notions above   can be germified, making sense the expression \emph{germ of Levi-flat foliation}.
A germ of  real analytic $1$-form $\omega$ at $0 \in \C^{n}$, defining a real analytic foliation $\F$, can be written as
\begin{equation}
\label{omega-def}
\omega = \sum_{j=1}^{n} A_{j} \dd x_{j} + B_{j} \dd y_{j},
\end{equation}
where $A_{j}, B_{j} \in \A_{n \R}$, for $j = 1, \ldots, n$, are
without non-trivial common factors.
We have that $\sing(\F)= \sing(\omega)$  is a
real analytic set of real codimension at least two, defined by the vanishing of all coefficients of $\omega$.
In complex coordinates $z = (z_{1},\cdots,z_{n})$  in $\C^{n}$, where
$z_{j} = x_{j} + iy_{j}$,
 we write
 \begin{equation}
\label{eq-omega}
\omega     =    \sum_{j=1}^{n} \frac{A_{j} - i B_{j}}{2} \dd z_{j} +
\sum_{j=1}^{n} \frac{A_{j} + i B_{j}}{2} \dd \bar{z}_{j}
  =   \frac{\eta + \bar{\eta}}{2} =  \re(\eta
 ) .
\end{equation}
The $1-$form
$\eta    =  \sum_{j=1}^{n} (A_{j} - i B_{j}) \dd z_{j}$
defines, outside $\sing(\F)$,  the intrinsic  distribution of complex hyperplanes associated with $\omega$.
Setting
\begin{equation}
\label{eq-omega-sharp}
 \omega^{\sharp}    = \im (\eta )  = \frac{\eta - \bar{\eta}}{2 i}
  =    \sum_{j=1}^{n} -B_{j}  \dd x_{j} + A_{j} \dd y_{j},
 \end{equation}
we have that  the germ of real analytic foliation $\F$ defined by $\omega$ is Levi flat if and only if
 the pfaffian system
$\{\omega, \omega^{\sharp}\}$ is integrable, which in turn is
  equivalent to
\begin{equation}
\label{eq-integrability}
 \dd \omega \wedge \omega \wedge \omega^{\sharp} = 0 \qquad \text{and} \qquad \dd \omega^{\sharp} \wedge \omega \wedge \omega^{\sharp} = 0 .
\end{equation}
In this case, the pfaffian system $\{\omega, \omega^{\sharp}\}$ defines the kernel foliation $\LL$.
As mentioned earlier,  $\LL$ in principle has real analytic regularity.
Holomorphic foliations that are kernel foliations  of   Levi-flat foliations  have the following characterization:
\begin{teo} {\rm \cite[Th.2]{mol-fernandez-rosas2020}}
\label{teo-Levi-closed}
A  germ of holomorphic foliation at $(\C^{n},0)$ is the kernel foliation of a germ of real-analytic  Levi-flat foliation if and only if it is defined by a closed meromorphic  $1$-form.
\end{teo}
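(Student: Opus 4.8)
The plan is to translate both properties into statements about the complex-valued $1$-form $\eta$ of \eqref{eq-omega}, whose real part is the defining form $\omega = \re(\eta)$, whose imaginary part is $\omega^{\sharp}$, and whose kernel carries the Levi (kernel) foliation. The hypothesis that the kernel foliation is a \emph{holomorphic} foliation $\G$ means exactly that, off the singular set, $\ker\eta$ coincides with $\ker\eta_{0}$ for some holomorphic $1$-form $\eta_{0}$ defining $\G$; hence $\eta = g\,\eta_{0}$ for a nonvanishing germ of complex real-analytic function $g$. Since $\G$ is holomorphic, $\eta_{0}$ is integrable and we may write $\dd\eta_{0} = \theta_{0}\wedge\eta_{0}$ with $\theta_{0}$ holomorphic (in dimension two this is automatic, as $\eta_{0}\wedge\dd\eta_{0}$ is a $3$-form).

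The first observation I would record is that, \emph{once the kernel is holomorphic}, the two Levi conditions \eqref{eq-integrability} hold automatically. A direct computation shows they are equivalent to the vanishing of $\dd\eta\wedge\eta\wedge\bar\eta$ and of its conjugate, and substituting $\eta = g\eta_{0}$ gives $\dd\eta\wedge\eta\wedge\bar\eta = g^{2}\bar g\,\dd\eta_{0}\wedge\eta_{0}\wedge\bar\eta_{0}$, which vanishes because $\dd\eta_{0}\wedge\eta_{0} = \theta_{0}\wedge\eta_{0}\wedge\eta_{0} = 0$. Thus the only genuine constraint is that $\F$ be a \emph{foliation}, i.e. $\omega\wedge\dd\omega = 0$. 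Writing $\dd\eta = \Theta\wedge\eta$ with $\Theta = \dfrac{\dd g}{g} + \theta_{0}$, I would reduce this to the single identity
\[ 4\,\omega\wedge\dd\omega = (\Theta - \bar\Theta)\wedge\eta\wedge\bar\eta, \]
so that $\F$ is Levi-flat with kernel $\G$ if and only if $(\Theta-\bar\Theta)\wedge\eta\wedge\bar\eta = 0$; taking the $(1,0)$-part this reads $\theta_{0} \equiv -\,\partial\log(g/\bar g)\ (\mathrm{mod}\ \eta_{0})$.

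This reduction makes one implication transparent. If $\G$ is defined by a closed meromorphic $1$-form $\tau$, I would take $\omega = \re(\tau)$; after multiplying by a suitable real-analytic factor to clear the (real-analytic) denominators one obtains a genuine germ of real-analytic $1$-form defining the same $\F$, and neither $\omega\wedge\dd\omega = 0$ nor \eqref{eq-integrability} is affected, since all these conditions are invariant under multiplication by real-analytic units (the extra terms carry a repeated factor of $\omega$). As $\tau$ is closed, $\dd\omega = \re(\dd\tau) = 0$ and $\dd\omega^{\sharp} = \im(\dd\tau) = 0$, so every condition holds trivially; and since the complex distribution attached to $\omega$ is $\ker\tau = \ker\eta_{0}$, the kernel foliation of this Levi-flat $\F$ is exactly $\G$.

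The hard part is the converse: from $\F$ Levi-flat with holomorphic kernel $\G$ I must produce a closed \emph{meromorphic} defining form, that is a \emph{meromorphic} germ $m$ with $\dd(m\,\eta_{0}) = 0$, equivalently $\theta_{0} \equiv -\,\dd m/m\ (\mathrm{mod}\ \eta_{0})$. The identity above supplies only a \emph{real-analytic} integrating datum: near a regular point $\F$ has a real-analytic first integral $\rho$, which is constant on the complex leaves of $\G$ and hence a real first integral of $\G$; then $\partial\rho$ annihilates $\ker\eta_{0}$, so $\partial\rho = h\,\eta_{0}$ and, comparing $(2,0)$-parts of $\dd(\partial\rho)$, $\theta_{0} \equiv -\,\partial h/h\ (\mathrm{mod}\ \eta_{0})$. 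The crux is to upgrade this real-analytic $h$ (equivalently the unimodular factor $g/\bar g$) to a genuinely meromorphic integrating factor, and I expect this to be the main obstacle. I would attack it by complexifying $\bar z$ into an independent variable, so that $\rho$ becomes a holomorphic first integral of the complexified foliation, and then by controlling the monodromy of its local holomorphic primitives around $\sing(\G)$ in order to descend to a single-valued meromorphic $m$; the degenerate cases, where the direct construction fails, should be handled through the meromorphic first integrals furnished by Theorem \ref{teo-cerveau-linsneto} applied to an invariant real-analytic Levi-flat leaf. Assembling these cases produces the closed meromorphic form and closes the equivalence.
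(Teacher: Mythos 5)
First, a remark on the comparison itself: this statement is not proved in the paper at all --- it is imported verbatim from \cite[Th.2]{mol-fernandez-rosas2020}, so there is no internal proof to measure your attempt against. Judged on its own, your proposal establishes only the easy half. Your algebraic reduction is sound: writing $\eta=g\,\eta_{0}$, the two conditions \eqref{eq-integrability} do hold automatically once the kernel distribution is holomorphic, and the identity $4\,\omega\wedge\dd\omega=(\Theta-\bar\Theta)\wedge\eta\wedge\bar\eta$ is correct, so the whole problem is indeed concentrated in the existence of a meromorphic integrating factor. The direction ``closed meromorphic $1$-form $\Rightarrow$ Levi-flat foliation with kernel $\G$'' goes through essentially as you write it (the factor clearing the denominators is not a unit, but the relevant identities are real-analytic and can be verified off its zero set, so this is harmless).

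The converse, however, is the entire content of the theorem, and your text does not prove it: it announces a strategy (``I would attack it by complexifying $\bar z$ \dots and then by controlling the monodromy \dots'') without carrying out any of its steps. Concretely, three things are missing. First, the local datum $\partial\rho=h\,\eta_{0}$ exists only near regular points of $\F$; to produce a global integrating factor you must glue these local data along leaves and around loops, and the obstruction is exactly the monodromy you invoke --- nothing in the proposal controls it. Second, even granting a single-valued factor on the complement of $\sing(\G)\cup\sing(\F)$, you must extend it meromorphically across that set; this needs an extension theorem together with an argument that the singularities are of polar type, neither of which is addressed. Third, the fallback through Theorem \ref{teo-cerveau-linsneto} does not work as stated: that theorem requires a germ of real-analytic Levi-flat \emph{hypersurface}, i.e.\ a semi-analytic invariant set, whereas the leaves of a Levi-flat foliation are in general transcendent (non-closed, not analytic as germs at the origin); deciding whether some leaf is actually analytic is itself part of the problem, so your ``degenerate cases'' are not delimited by any checkable condition. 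In short, the proposal correctly isolates the crux --- upgrading the real-analytic unimodular factor $g/\bar g$ to a genuinely meromorphic integrating factor --- but leaves precisely that crux unproved, which is why the result carries the weight of a separate paper.
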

In the theorem's context, suppose that $\G$ is a holomorphic foliation
 tangent to a Levi-flat foliation $\F$,
with $\G$   defined by a meromorphic $1$-form $\tau$. If    $\G$ does not admit a meromorphic first integral,   then
$\F$ is defined by a real meromorphic $1$-form $\re(\tau) = (\tau + \bar{\tau})/2$, where $\tau$ is a closed meromorphic $1$-form defining $\G$
 \cite[Th.2, Ex.5.1]{mol-fernandez-rosas2020}.
Thus,  all Levi-flat foliations tangent to $\G$   are defined by real meromorphic $1$-forms
of the kind $\re(\alpha \tau) = (\alpha \tau + \bar{\alpha }\bar{\tau})/2$, where $\alpha \in \C^{*}$.


\subsection{Pfaffian hypersurfaces tangent to holomorphic foliations}
Let $V$ be a germ of pfaffian hypersurface defined at $0 \in \C^{n}$ by a germ of real-analytic $1$-form $\omega$.
By that we mean  that there is a semi-analytic neighborhood $U$ of the origin, with  $\omega$ and $V$ being realized in a neighborhood of $\bar{U}$,
 such that $\{V,\omega,W\}$ is pfaffian, where $W = U \setminus \sing(\omega)$.
In particular, $0 \in \ov{V}$, since we are evidently supposing that the germ of $V$ is non-trivial.
We say that $\G$  is \emph{tangent to}  $V$ or, equivalently,
$V$ is \emph{invariant} by $\G$ if $T_{p}\G \subset T_{p}V$
for every $p \in V$. In other words, in $W$, $V$ is a Levi-flat hypersurface  whose Levi foliation
is the restriction of $\G$ to $V$.
If $V$ is semi-analytic in $U$, then it is contained in a (possibly singular) real analytic hypersurface, which will also be
Levi-flat. In this case, we can apply Cerveau-Lins Neto's result (Theorem \ref{teo-cerveau-linsneto}) in order to conclude that
$\G$ has a meromorphic first integral.

Hence, let us suppose that the invariant  pfaffian hypersurface is  transcendent. In this case, by Corollary \ref{cor-transcendent}, $\omega$ is integrable, defining
a germ of singular real analytic foliation of codimension one in $U$. More than that, in this case
$\omega$ defines a Levi-flat foliation, as shown next:
\begin{prop}
\label{prop-levi-flat-foliation}
Let $V$ be a germ of  transcendent pfaffian hypersurface at $0 \in \C^{n}$ which is a solution of a germ of real analytic $1$-form
$\omega$. Suppose that $V$ is tangent to a germ of singular holomorphic foliation $\G$.
Then $\omega$ defines a singular real analytic Levi-flat foliation whose kernel foliation is $\G$.
\end{prop}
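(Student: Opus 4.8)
The plan is to translate the tangency hypothesis into an algebraic relation between the complex $1$-form $\eta$ of \eqref{eq-omega} and a holomorphic $1$-form $\theta$ defining $\G$, and then to verify the Levi-flatness criterion \eqref{eq-integrability} directly. First I would record the pointwise meaning of tangency. For $p\in V$ the tangent space $T_{p}V=\ker\omega(p)$ is a real hyperplane, while $T_{p}\G=\ker\theta(p)$ is a complex hyperplane, hence an $i$-invariant real subspace of dimension $2n-2$. From $T_{p}\G\subset T_{p}V$ together with the $i$-invariance of $T_{p}\G$ one gets $T_{p}\G\subset T_{p}V\cap iT_{p}V=\cl{L}_{p}$, and a dimension count upgrades this to equality $T_{p}\G=\cl{L}_{p}$ on $V$. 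Since, by \eqref{eq-omega} and \eqref{eq-omega-sharp}, the Levi distribution of $\omega$ is exactly $\cl{L}_{p}=\ker\omega(p)\cap\ker\omega^{\sharp}(p)=\ker\eta(p)$, this says that $\eta$ and $\theta$ define the same complex hyperplane at every point of $V$; equivalently, the $(1,0)$-form $\eta\wedge\theta$ vanishes identically on $V$.

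The next step uses transcendency exactly as in Corollary \ref{cor-transcendent}. The coefficients of $\eta\wedge\theta$ are complex-valued real analytic functions that vanish on $V$, a transcendent set of real codimension one in $W$; hence they vanish on a full neighborhood, so $\eta\wedge\theta\equiv 0$. Outside the singular sets this yields a local factorization $\eta=g\,\theta$, with $g$ a complex-valued real analytic function. Since $\G$ is a foliation we have $\theta\wedge\dd\theta=0$, whence $\dd\eta\wedge\eta=g^{2}\,\dd\theta\wedge\theta=0$, and taking conjugates, $\dd\bar\eta\wedge\bar\eta=0$.

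Finally I would verify \eqref{eq-integrability}. A direct computation from $\omega=(\eta+\bar\eta)/2$ and $\omega^{\sharp}=(\eta-\bar\eta)/(2i)$, using $\eta\wedge\eta=\bar\eta\wedge\bar\eta=0$, gives $\omega\wedge\omega^{\sharp}=\tfrac{i}{2}\,\eta\wedge\bar\eta$. Therefore both conditions in \eqref{eq-integrability} reduce to $\dd\omega\wedge\eta\wedge\bar\eta=0$ and $\dd\omega^{\sharp}\wedge\eta\wedge\bar\eta=0$. Writing $\dd\omega=\tfrac12(\dd\eta+\dd\bar\eta)$ and $\dd\omega^{\sharp}=\tfrac1{2i}(\dd\eta-\dd\bar\eta)$, each of these becomes a linear combination of $\dd\eta\wedge\eta\wedge\bar\eta$ and $\dd\bar\eta\wedge\eta\wedge\bar\eta$, and both terms vanish by the previous step (the first because $\dd\eta\wedge\eta=0$, the second because $\dd\bar\eta\wedge\bar\eta=0$). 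Hence $\omega$ is Levi-flat. Its kernel foliation, defined by the pfaffian system $\{\omega,\omega^{\sharp}\}$, has distribution $\ker\eta=\ker\theta=T\G$, so it coincides with $\G$, as required.

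The main obstacle is the first paragraph: making rigorous the passage from the geometric inclusion $T_{p}\G\subset T_{p}V$ to the clean algebraic identity $\eta\wedge\theta\equiv 0$, in particular the identification of the Levi distribution of $\omega$ with $\ker\eta$ and the dimension count that turns the inclusion $T_{p}\G\subset\cl{L}_{p}$ into an equality along $V$ (together with the harmless treatment of the codimension-two singular sets on which the factorization $\eta=g\,\theta$ is read off). Once $\eta=g\,\theta$ is available, the verification of \eqref{eq-integrability} and the identification of the kernel foliation are purely formal.
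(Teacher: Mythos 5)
Your route is genuinely different from the paper's, and its core steps are sound. The paper never introduces the holomorphic $1$-form $\theta$: it applies Proposition \ref{prop-integrable} with $\ell=2$ to the system $\{\omega,\omega^{\sharp}\}$ along each leaf $L$ of $\G$ contained in $V$ (such an $L$ is invariant by the system, by the same $i$-invariance observation you make), concluding that the two $4$-forms of \eqref{eq-integrability} vanish on $V$, and then spreads this to a full neighborhood by transcendency; the kernel statement is obtained by a second transcendency argument, applied to the tangency locus of $\F$ and $\G$. You instead apply transcendency once, at the level of the $2$-form $\eta\wedge\theta$, and then derive both \eqref{eq-integrability} and the identification $\ker\eta=\ker\theta$ by formal algebra from the local factorization $\eta=g\theta$ and the integrability of $\theta$. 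Your pointwise linear algebra is correct: for $p\in V$ off the singular sets, $T_{p}\G\subset T_{p}V$ plus $i$-invariance of $T_{p}\G$ forces $T_{p}\G=T_{p}V\cap iT_{p}V=\ker\omega(p)\cap\ker\omega^{\sharp}(p)=\ker\eta(p)$, whence $\eta(p)$ and $\theta(p)$ are proportional as complex functionals; and the computations $\dd\eta\wedge\eta=g^{2}\,\dd\theta\wedge\theta=0$ and $\omega\wedge\omega^{\sharp}=\tfrac{i}{2}\,\eta\wedge\bar\eta$ are right. Your scheme has the merit of making explicit where the integrability of $\G$ enters, and of getting the kernel identification for free from $\eta\wedge\theta\equiv 0$.

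There is, however, one genuine omission: you never prove that $\omega$ itself is integrable, i.e.\ $\dd\omega\wedge\omega\equiv 0$. This is part of the conclusion: in the paper's terminology, ``$\omega$ defines a singular real analytic Levi-flat foliation'' means both that $\omega$ satisfies Frobenius (so that it defines a codimension-one real analytic foliation $\F$) \emph{and} that $\F$ is Levi-flat; the equivalence of Levi-flatness with \eqref{eq-integrability} is stated only for an already integrable $\omega$. Moreover, \eqref{eq-integrability} together with your factorization does \emph{not} imply $\dd\omega\wedge\omega=0$: take $\theta=\dd z_{1}$ and $\eta=g\theta$ with $g=e^{ix_{2}}$ (where $z_{2}=x_{2}+iy_{2}$). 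Then $\eta\wedge\theta\equiv 0$, $\dd\eta\wedge\eta\equiv0$, and both conditions of \eqref{eq-integrability} hold (the kernel distribution is the foliation $z_{1}=\mathrm{const}$), yet $\dd\omega\wedge\omega=\tfrac{i}{2}\,\dd x_{2}\wedge \dd z_{1}\wedge \dd\bar z_{1}\neq 0$ everywhere. Of course, in this example $\omega$ admits no integral hypersurface at all, which is precisely the point: integrability of $\omega$ must come from using the hypothesis on $V$ once more, not from the relation $\eta=g\theta$. The fix is one line and is exactly how the paper opens its proof: Corollary \ref{cor-transcendent} (Proposition \ref{prop-integrable} with $\ell=1$, followed by transcendency) gives $\dd\omega\wedge\omega\equiv 0$. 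With that line added, your argument is complete.
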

\begin{proof} As noticed above, $\omega$ is integrable as a consequence of Corollary \ref{cor-transcendent}, defining a real analytic foliation of real codimension one.
In order to see that this foliation is Levi-flat, we start by fixing a leaf $L$ of $\G$ contained in $V$. We  invoke   Proposition $\ref{prop-integrable}$ for $L$, setting $\ell = 2$,  $\omega_{1} = \omega$
and $\omega_{2} = \omega^{\sharp}$, and then conclude that the $4$-forms in \eqref{eq-integrability} vanish over $L$. Then, varying $L$ over $V$, we have
that they actually vanish over $V$. Now, once again, considering that
 $V$ is transcendent of codimension one, we conclude that  both equations in \eqref{eq-integrability} actually vanish in
a neighborhood of $0 \in \C^{n}$,     proving that $\omega$ defines a   Levi-flat foliation $\F$.
In order to see that the kernel foliation of $\F$ is $\G$, it is enough to see that the tangency locus of $\F$ and $\G$ is
a real analytic set that contains $V$. Since $V$ is transcendent, this locus is a full neighborhood of $0 \in \C^{n}$, proving the result
\end{proof}


\section{Simple singularities}
\label{sec-simples}

The results in this section concern germs of simple singularities of holomorphic foliations at $(\C^2,0)$  tangent to
germs Rolle-pfaffian hypersurfaces. We can thus consider that both, the hypersurface $V$
and the holomorphic foliation $\G$, are germs at $(\C^2,0)$,  realized   in a neighborhood
of  $\ov{U}$, where $U=\Delta$ is the   unit polydisc centered at the origin.
It is implicit in this case that $0 \in \C^{2}$
belongs to the closure of $V$.

\subsection{Hyperbolic singularities}

We first consider hyperbolic singularities, those corresponding to a multiplier
 $\lambda     \in \C \setminus \R$. In this case, the foliation is linearizable, i.e.
there are  holomorphic coordinates $(x,y)$ in which the holomorphic foliation $\G$
is defined by the $1$-form $\eta = y \dd x - \lambda x \dd y$.
Singularities of this kind are incompatible with the existence of invariant  Rolle-pfaffian hypersurfaces:

\begin{prop}
\label{prop-hyperbolic}
Let $\G$ be a germ of holomorphic foliation at $0 \in \C^{2}$ having a  singularity of hyperbolic type. Then there are no   Rolle-pfaffian hypersurfaces  invariant by $\G$.
\end{prop}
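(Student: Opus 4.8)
The plan is to assume, for contradiction, that some Rolle-pfaffian hypersurface $V$ is invariant by $\G$, and then to produce an analytic arc transverse to the defining form that meets $V$ twice. First I would dispose of the semi-analytic case: if $V$ were semi-analytic it would lie inside a (possibly singular) real-analytic Levi-flat hypersurface, so Theorem \ref{teo-cerveau-linsneto} would furnish a meromorphic first integral for $\G$; but the linear model $\eta=y\,\dd x-\lambda x\,\dd y$ with $\lambda\in\C\setminus\R$ (in particular $\lambda\notin\Q$) admits none, since its leaves $xy^{-\lambda}=\text{const}$ are not level sets of any meromorphic function. Hence $V$ is transcendent, and by Proposition \ref{prop-levi-flat-foliation} its defining $1$-form $\omega$ generates a Levi-flat foliation $\F$ whose kernel foliation is $\G$. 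By Theorem \ref{teo-Levi-closed} and the discussion following it, since $\G$ has no meromorphic first integral, $\F$ is defined by the closed real $1$-form $\omega=\re(\alpha\tau)$ for some $\alpha\in\C^{*}$, where $\tau=\frac{\dd x}{x}-\lambda\frac{\dd y}{y}$ is the closed meromorphic $1$-form defining $\G$; and $V$ is one of the leaves of $\F$.

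Next I would make the period structure explicit on $W=U\setminus\{xy=0\}$, where $\omega$ is nonsingular. A multivalued primitive is $u=\re\big(\alpha\log x-\alpha\lambda\log y\big)$, and since $\pi_{1}(W)\cong\Z^{2}$ is generated by small loops around the two axes, the period group of $\omega$ is $\Lambda=\langle-2\pi\im\alpha,\ 2\pi\im(\alpha\lambda)\rangle\subset\R$. Because $\omega$ is closed and nonsingular on $W$, the leaf $V$ through a point with $u=c_{0}$ is exactly $\{q\in W:\,u(q)\in c_{0}+\Lambda\}$: two points lie on the same leaf precisely when they are joined by a path of zero $\omega$-integral. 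I note that $\Lambda\neq\{0\}$, for if $\im\alpha=0$ then $\alpha\in\R^{*}$ and $\im(\alpha\lambda)=\alpha\,\im\lambda\neq0$, using $\lambda\notin\R$.

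The decisive step is to choose a transverse arc along which $u$ is strictly monotone with unbounded range. Here the hypothesis $\lambda\notin\R$ reappears as the elementary fact that $\re\alpha$ and $\re(\alpha\lambda)$ cannot both vanish: if $\re\alpha=0$ then $\alpha$ is purely imaginary and $\re(\alpha\lambda)=-(\im\alpha)\,\im\lambda\neq0$. Suppose $\re\alpha\neq0$ (the other case is symmetric, using the $y$-axis). Fix $y_{0}\neq0$ and $\phi_{0}$ and take the radial analytic arc $\sigma(s)=(s\,e^{i\phi_{0}},y_{0})$, $s\in(0,\epsilon)$, which stays in $W$. Along it $u(\sigma(s))=\re(\alpha)\log s+\text{const}$, so $u\circ\sigma$ is strictly monotone with $\frac{\dd}{\dd s}(u\circ\sigma)=\re(\alpha)/s\neq0$; thus $\sigma$ is everywhere transverse to $\omega$, and its range is an interval unbounded on one side. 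Since $\Lambda$ is a nontrivial subgroup of $\R$, the coset $c_{0}+\Lambda$ is infinite and unbounded in both directions, so $u\circ\sigma$ meets it at infinitely many parameters; each corresponding point lies on $V$. Restricting $\sigma$ to the interval between two of them yields an analytic arc transverse to $\omega$ with both endpoints on $V$ and no intermediate tangency, contradicting the Rolle-Khovanskii property.

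I expect the main obstacle to be conceptual rather than computational: by the classical Rolle theorem a single-valued monotone primitive can never return to the same value, so a transverse arc can meet $V$ twice only by exploiting the multivaluedness of $u$, i.e. the nonvanishing of the periods; and it is exactly the condition $\lambda\notin\R$ that forces both $\Lambda\neq\{0\}$ and the unboundedness of some radial variation of $u$. Pinning down these two facts, together with the identification of the leaf as the full period-lattice level set of $u$, is the heart of the argument, after which the contradiction is immediate. As an equivalent route kept in reserve, I would use the holonomy picture: the holonomy of $\G$ around either separatrix is a loxodromic linear map $y\mapsto\rho y$ with $|\rho|\neq1$ (because $\im\lambda\neq0$), the trace of $V$ on a transversal is invariant under it and is therefore a logarithmic spiral accumulating at the origin, which a radial transversal crosses infinitely often.
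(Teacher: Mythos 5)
Your proposal is correct in substance, and its core mechanism is genuinely different from the paper's. Both proofs share the same reduction: $V$ must be transcendent (semi-analyticity plus Theorem \ref{teo-cerveau-linsneto} would force a meromorphic first integral, impossible for hyperbolic $\lambda$), so by Proposition \ref{prop-levi-flat-foliation} and \cite[Ex.5.1]{mol-fernandez-rosas2020} the Levi-flat foliation containing $V$ is defined by $\re(\alpha\tau)$, with $\tau=\frac{\dd x}{x}-\lambda\frac{\dd y}{y}$ and $\alpha\in\C^{*}$. From there the paper restricts $\F$ to a transversal $\Sigma=\{(x,1)\}$ (or to one transverse to the $x$-axis) and argues that the induced one-dimensional foliation $\alpha\bar{x}\,\dd x+\bar{\alpha}x\,\dd\bar{x}=0$ has focus dynamics, so the trace of $V$ is a spiral meeting a line through the origin infinitely often. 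You instead compute the period group $\Lambda=\langle -2\pi\im\alpha,\,2\pi\im(\alpha\lambda)\rangle$ of the closed real form $\omega$ on $W=\Delta^{*}\times\Delta^{*}$ and exhibit a radial arc along which the multivalued primitive $u$ is monotone and unbounded, hence meets the coset $c_{0}+\Lambda$ infinitely often. One thing your route buys: it needs no case analysis on the phase of $\alpha$. The paper's ``focus dynamics'' literally requires $\alpha\notin\R\cup i\R$; if, say, $\alpha$ and $\lambda$ are both purely imaginary, then $\alpha\lambda\in\R$ and the traces on the two sections are rays and concentric circles respectively, the Rolle contradiction then coming from infinitely many circles accumulating at the origin rather than from a spiral --- a configuration your period argument covers without modification. (Your reserve holonomy argument, by contrast, is essentially the paper's proof.)

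There is one step you must repair, although the statement you need is true. You justify ``the leaf $V$ equals $\{q\in W:\,u(q)\in c_{0}+\Lambda\}$'' by asserting that two points lie on the same leaf precisely when they are joined by a path of zero $\omega$-integral. As a general principle for closed nonsingular $1$-forms this is false: for $\omega=\dd x$ on $\R^{2}\setminus\{0\}$, the points $(0,1)$ and $(0,-1)$ are joined by a path of zero integral yet lie on different leaves; level sets of a primitive need not be connected, and a zero-integral path need not be deformable into a tangent one. The identification does hold in your situation, but for a reason that must be stated: under $(X,Y)\mapsto(e^{X},e^{Y})$ the universal cover of $W$ is the convex set $\{\re X<0\}\times\{\re Y<0\}$, on which $u$ lifts to the real-linear function $\re(\alpha X-\alpha\lambda Y)$; its level sets are convex, hence connected, so each leaf of $\F$ is the image of exactly one such level set, which is precisely the coset description you use, and each point of $\sigma$ with $u$-value in $c_{0}+\Lambda$ does lie on $V$. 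With this patch (and, as a minor point, replacing ``the leaves $xy^{-\lambda}=\mathrm{const}$ are not level sets of any meromorphic function'' by the standard fact that a meromorphic first integral at a non-degenerate simple singularity forces $\lambda\in\Q_{-}$), your proof is complete.
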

\begin{proof}
We work by contradiction and admit that $\G$ is tangent to a Rolle-pfaffian hypersurface $V$.
It must be transcendent, for otherwise Theorem \ref{teo-cerveau-linsneto}    would imply the existence of a holomorphic first integral, which
would result in $\lambda \in \Q_{-}$.
Hence, by Proposition \ref{prop-levi-flat-foliation}, its subjacent $1$-form
$\omega$ defines a real analytic Levi-flat foliation $\F$ whose Levi foliation is $\G$.
We take holomorphic coordinates $(x,y)$, defined
in a neighborhood of $\ov{\Delta}$, in which $\G$
is defined by the $1$-form $\eta = y \dd x - \lambda x \dd y$ or, equivalently, by
the meromorphic $1$-form $\tau = \frac{\dd x}{x} - \lambda \frac{\dd y}{y}$.
By \cite[Ex.5.1]{mol-fernandez-rosas2020}, the Levi-flat foliation $\F$ is defined by the real meromorphic $1$-form
\[ \re(\alpha \tau) = \frac{\alpha \tau  + \bar{\alpha} \bar{\tau}}{2}
= \alpha \frac{\dd x}{x} + \bar{\alpha}\frac{\dd \bar{x}}{\bar{x}} -  \alpha\lambda \frac{\dd y}{y}  - \bar{\alpha} \bar{\lambda} \frac{\dd \bar{y}}{\bar{y}}, \]
for some $\alpha \in \C^{*}$. Since $\lambda \in \C \setminus \R$, we have that either $\alpha \in \C \setminus \R$ or
$\alpha \lambda \in \C \setminus \R$.
Assume that the first case happens. Take the transverse section $\Sigma = \{ (x,1); |x|<1\}$. The real one-dimensional foliation
$\F|_{\Sigma}$ is defined by $\alpha \bar{x} \dd x + \bar{\alpha} x \dd \bar{x}$. Since $\alpha \in \C \setminus \R$, this foliation
has a focus dynamics around $x=0$. Thus any leaf will have infinitely many points of intersection with a line in $\Sigma$ crossing the origin, and the same is true
for $V \cap \Sigma$, which, by definition, says that $V$ is not Rolle. The same argument works for $\alpha \lambda \in \C \setminus \R$, by taking a section transverse to the $x$-axis.
\end{proof}

\subsection{Elliptic singularities}

The case of elliptic singularities,   those with multiplier
  $\lambda \in \R \setminus \Q$, contrasts  with what Proposition \ref{prop-hyperbolic} describes for hyperbolic singularities.
They may be tangent
to Rolle-pfaffian hypersurfaces, as  the following example describes:
\begin{example}
\label{ex-nodal}
{\rm If $\lambda \in \R_{+}$, we say that $\G$ has a
\emph{nodal singularity}. In this case
$\G$ is linearizable in analytic coordinates $(x,y)$, in which $\G$ is defined by the linear $1$-form $\eta = ydx - \lambda x \dd y$.
Then, $\G$ is tangent to the hypersurfaces $V_{c}$ of equation  $|x|= c|y|^{\lambda}$, where $c>0$.
These are pfaffian hypersurfaces, called \emph{nodal separators} in the terminology of \cite{marin2012}, tangent to the real
analytic $1$-form
\[\omega = |x|^{2} |y|^{2} \re \left(  \frac{ \dd x}{x} - \lambda \frac{\dd y}{y} \right) = \frac{1}{2}
(\bar{x} |y|^{2}  \dd x + x|y|^{2} \dd \bar{x} + \lambda|x|^{2}\bar{y} \dd y +\lambda|x|^{2}y \dd \bar{y} ), \]
which is regular outside the coordinate axes.
These hypersurfaces are Rolle, since they are separating, being the levels of the
 the function $f(x,y) = |x||y|^{-\lambda}$, which is real analytic for $x,y \neq 0$. Remark that, for a given $c>0$, the only accumulation
 point of $V_{c}$ on the coordinate axes is the  origin.
 }\end{example}

A foliation with an elliptic singularity with multiplier    $\lambda \in \R_{-}$ may not be linearizable.
In the linearizable case, when   $\G$ is defined by the linear $1$-form $\eta = ydx - \lambda x \dd y$ as above,
the hypersurfaces $ |x|= c|y|^{\lambda}$, with $c>0$, are Rolle-pfaffian and invariant by $\G$, but   do
not accumulate to $0 \in \C^{2}$. However,  there are hypersurfaces in this family arbitrarily near to the origin.
On the other hand, non-linearizable elliptic foliation are not tangent to  pfaffian hypersurfaces:

\begin{prop}
\label{prop-elliptic}
Let $\G$ be a germ of holomorphic foliation at $0 \in \C^{2}$ having a non-linearizable elliptic singularity.
 Then   $\G$ is not tangent to a   pfaffian hypersurface.
\end{prop}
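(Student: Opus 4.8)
The plan is to argue by contradiction, exactly parallel to the hyperbolic case, but now exploiting the subtler dynamics of an irrational linear part together with the failure of linearizability. Suppose $\G$ has a non-linearizable elliptic singularity and is tangent to a pfaffian hypersurface $V$. As in Proposition \ref{prop-hyperbolic}, $V$ cannot be semi-analytic: Theorem \ref{teo-cerveau-linsneto} would then give a meromorphic first integral, and for a non-degenerate singularity a meromorphic first integral forces $\lambda \in \Q_{-}$, contradicting $\lambda \in \R \setminus \Q$ — moreover a holomorphic first integral would in particular linearize the foliation. Hence $V$ is transcendent, and Proposition \ref{prop-levi-flat-foliation} applies: the subjacent $1$-form $\omega$ defines a real analytic Levi-flat foliation $\F$ whose kernel foliation is $\G$.

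Next I would fix holomorphic coordinates $(x,y)$ in which $\G$ is induced by a vector field with linear part $x\der{x} + \lambda y \der{y}$ (eigenvalue ratio $\lambda$), so that $\G$ is defined by a germ of holomorphic $1$-form $\eta = y\,\dd x - \lambda x\,\dd y + (\text{higher order})$. The key structural input is that $\G$, being a non-degenerate singularity with $\lambda \in \R \setminus \Q$, has exactly two smooth analytic separatrices, which I may take to be the axes $\{x=0\}$ and $\{y=0\}$. The decisive tool is the \emph{holonomy} of $\G$ around one of these separatrices, say computed on a transverse section $\Sigma$ to the $y$-axis at a regular point; its linear part is multiplication by $e^{2\pi i \lambda}$ (an irrational rotation, since $\lambda \in \R \setminus \Q$). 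The non-linearizability of $\G$ is equivalent to the non-linearizability of this holonomy germ $h$, i.e. $h$ is an analytic germ fixing $0$ with $h'(0) = e^{2\pi i\lambda}$ that is \emph{not} analytically conjugate to its rotation.

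The heart of the argument is then to translate the Rolle condition for $V$ into a statement about orbits of $h$ on $\Sigma$ and derive a contradiction from non-linearizability. Since $V$ is a Levi-flat leaf of $\F$ saturated by leaves of $\G$, the trace $V \cap \Sigma$ is invariant under the holonomy pseudogroup; in particular it is a union of $h$-orbits. I would show that for a non-linearizable germ $h$ with irrational rotation linear part, every orbit accumulates to $0$ in a spiralling fashion that forces any candidate invariant set $V\cap\Sigma$ either to be a single point (ruling out a hypersurface through nearby points) or to accumulate on itself along a real-analytic transversal, contradicting the Rolle-Khovanskii property — a transversal to $\F$ would then meet $V$ infinitely often. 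Concretely, the non-linearizability of $h$ obstructs the existence of a real-analytic $h$-invariant curve in $\Sigma$ passing near $0$ (any invariant real curve would give, via a classical normalization argument, an analytic linearization), so $V\cap\Sigma$ cannot be a one-dimensional analytic arc; but transcendency of $V$ together with Proposition \ref{prop-levi-flat-foliation} forces $V\cap\Sigma$ to be precisely such a curve. I expect the main obstacle to be this last implication: rigorously deducing from ``$h$ is non-linearizable with irrational rotation number'' that no $h$-invariant real-analytic curve (equivalently, no invariant Rolle-pfaffian trace) can exist near the fixed point, which is where the interplay between the irrational rotation dynamics and the finiteness/non-oscillation content of the Rolle condition (Theorem \ref{teo-finiteness}) must be brought to bear.
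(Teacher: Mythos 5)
Your skeleton matches the paper's up to a point: argue by contradiction, rule out the semi-analytic case via Theorem \ref{teo-cerveau-linsneto}, conclude $V$ is transcendent, invoke Proposition \ref{prop-levi-flat-foliation} to get a Levi-flat foliation with kernel foliation $\G$, and then aim at linearizability of the separatrix holonomy so that Mattei--Moussu \cite[Th. 2]{mattei1980} forces $\G$ to be linearizable, a contradiction. But the step you yourself flag as ``the main obstacle'' is precisely the step that constitutes the proof, and your sketch of it does not go through. The paper closes the argument in one stroke by citing \cite[Lem. 2]{belko2003}: if $\G$ is tangent to a real-analytic Levi-flat foliation, then the holonomy of each separatrix is analytically linearizable. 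You instead try to derive this from the dynamics of a non-linearizable germ $h$ with irrational rotation number, and the dynamical facts you invoke are not true: it is false that every orbit of such an $h$ ``accumulates to $0$ in a spiralling fashion'' (Cremer-type germs may have small cycles, i.e. periodic orbits accumulating at the fixed point, and in general the dynamics near the P\'erez-Marco hedgehog is notoriously delicate). Moreover, the classical ``invariant curve implies linearizable'' argument (Riemann mapping of the enclosed disc) requires an invariant \emph{simple closed curve surrounding} the fixed point, whereas the trace $V\cap\Sigma$ is a priori only a union of leaves of the one-dimensional foliation $\F|_{\Sigma}$, invariant under the holonomy pseudogroup \emph{as a union}; nothing in your argument forces it to be, or to contain, a single closed analytic curve around $0$.

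There is a second, structural problem: the proposition assumes only tangency to a \emph{pfaffian} hypersurface --- no Rolle hypothesis --- and the paper's proof never uses the Rolle-Khovanskii property (Belko's lemma needs only the Levi-flat tangency). Your argument leans essentially on Rolle, through the finiteness Theorem \ref{teo-finiteness} and the claim that a transversal would otherwise meet $V$ infinitely often. So even if your dynamical step could be repaired, it would prove only the weaker statement that no \emph{Rolle}-pfaffian hypersurface is invariant, not the proposition as stated.
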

\begin{proof}
Suppose, aiming at a contradiction, that $\G$ is tangent to a Rolle-pfaffian hypersurface $V$ accumulating to $0 \in \C^{2}$.
As argued in Proposition \ref{prop-hyperbolic},  $V$ must be transcendent, so that
its subjacent real-analytic $1$-form defines a Levi-flat foliation whose Levi foliation is $\G$.
By \cite[Lem. 2]{belko2003}, the holonomy maps corresponding each one of the  separatrices of $\G$ are linearizable.
This is a contradiction, since, in this case,
$\G$ itself would be linearizable  \cite[Th. 2]{mattei1980}.
\end{proof}

\subsection{Saddle-node singularities}
\label{subsection-saddlenode}

Let $\G$ be a germ of singular holomorphic foliation having a simple singularity at $0 \in \C^{2}$ of
saddle-node type, i.e. engendered by a holomorphic vector field whose linear part has exactly one zero eigenvalue.
Then there are formal coordinates $(x,y)$ in which $\G$ is defined by the $1$-form
\begin{equation}
\label{eq-saddle-node}
\eta = x(1+\mu y^k)  \dd y - y^{k+1} \dd x,
\end{equation}
where $k \geq 1$ and $\mu \in \C$ \cite[p.64]{martinet1982}.
The saddle-node singularity
is \emph{analytically normalizable} if these coordinates $(x,y)$   can  be taken to be analytic.
The $1$-form $\eta$  has the meromorphic integrating factor
$F =  1/(x y^{k+1})$, i.e.
\begin{equation}
\label{eq-saddle-node-closed}
\tau =   \frac{\eta}{x y^{k+1}} =    \left( \frac{\mu }{y} + \frac{1}{y^{k+1}} \right)  \dd y - \frac{\dd x}{x}
\end{equation}
is closed, giving rise to  the multivalued function $xy^{-\mu}e^{\frac{1}{k}y^{-k}}$
as a first integral.
The strong separatrix corresponds to  the $x$-axis and the weak separatrix to the $y$-axis.  The numbers $k$ and $\mu$  are formal invariants. The integer $k+1$ is the Milnor number of the saddle-node (see \cite{camacho1984}) and
can be seen as a measure of the extent to which
the singularity fails to be non-degenerate.
In the lines below,
the complex number $\mu$ will be referred to  as the \emph{weak multiplier}.
This number coincides with the \emph{Camacho-Sad index} (see \cite{camacho1982}) of $\G$ with respect to the weak separatrix.

One relevant dynamical property of a saddle-node singularity is  that, in a small neighborhood of the origin, all the leaves outside
the two separatrices
accumulate on the  strong separatrix. This can be seen from the fact that the holonomy  of the latter, with respect to a one-dimensional  section $(\Sigma,p) \simeq (\C,0)$ transverse to the strong separatrix, written in a holomorphic coordinate $z \in \Sigma$,
has the form $\Phi(z) = z + \lambda z^{k+1} + ...$, for some
$\lambda \in \C^{*}$, $h.o.t.$ standing for higher order terms
  (\cite[p.482]{mattei1980}, \cite{canocerveau2013}). The dynamics of this map has the so-called \emph{flower structure}, as a consequence of
Leau-Fatou Flower Theorem \cite[Th. 2.12]{bracci2010}. It says in particular that  all   orbits of $\Phi$ outside $z=0$ accumulate, in positive or negative time, to $z=0$.
Hence, a saddle-node singularity is incompatible with the existence of a meromorphic first integral, since, in this case, all leaves in a neighborhood of the origin would be closed, resulting
in a holonomy map  with finite orbits. Hence, Theorem \ref{teo-cerveau-linsneto} rules out
the existence of a real analytic Levi-flat hypersurface at a saddle-node singularity.

If $\G$ is an analytically normalizable saddle-node singularity, then  the weak separatrix is convergent. In this case, its the holonomy
with respect to
a transverse section $(\Sigma,p) \simeq (\C,0)$, with an analytic coordinate $z$,  has the form
$\Phi(z) = e^{2 \pi i \mu} z + h.o.t.$. This is a straightforward calculation that can be seen,   for instance,
in \cite[Sec. 6.6.3]{canocerveau2013}.

Assume now that the saddle-node singularity is tangent to a
germ of   Rolle-pfaffian hypersurface $V$.
The non-existence  of a meromorphic first integrals implies that $V$
must be transcendent.
Thus, by Proposition \ref{prop-levi-flat-foliation}, $\G$ is tangent to a Levi-flat foliation and, by \cite[Lem. 1]{belko2003}, the saddle-node singularity
is analytically normalizable.
Taking normalizing coordinates
 $(x,y)$ as above,   the closed meromorphic $1$-form $\tau$ in \eqref{eq-saddle-node-closed} defines $\G$.
In these coordinates, any other closed meromorphic $1$-form defining $\G$ is of the form
$\alpha \tau$, for some $\alpha \in \C^{*}$.
By \cite[Ex.5.1]{mol-fernandez-rosas2020}, for each $\alpha \in \C^{*}$, the real meromorphic $1$-form
\begin{equation}
\label{eq-meromorphic-levi}
 \re(\alpha \tau) =  \frac{ \alpha \tau + \bar{\alpha} \bar{\tau}}{2}
 = \frac{1}{2} \left(  \alpha  \left( \frac{\mu }{y} + \frac{1}{y^{k+1}} \right)  \dd y
 + \bar{\alpha} \left( \frac{\bar{\mu} }{\bar{y}} + \frac{1}{\bar{y}^{k+1}} \right)  \dd \bar{y}
  -  \alpha \frac{\dd x}{x}
   - \bar{\alpha} \frac{\dd \bar{x}}{\bar{x}}  \right),
\end{equation}
induces a    Levi-flat foliation $\F$ tangent to $\G$ and, moreover, all tangent Levi-flat foliations are produced
in this manner. The comments in this paragraph can be summarized in the following statement:
\begin{prop}
\label{prop-saddle-node-pfaffian}
Let $\G$ be a germ of holomorphic foliation  at $0 \in \C^{2}$ of saddle-node type. Then  $\G$ is tangent to  a pfaffian hypersurface if and only if it is analytically normalizable.
\end{prop}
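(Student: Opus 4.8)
The plan is to prove both directions of the equivalence.

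The "only if" direction has essentially been assembled in the paragraphs immediately preceding the statement, so the task is to package that reasoning. Assume $\G$ is tangent to a pfaffian hypersurface $V$. Since a saddle-node admits no meromorphic first integral --- the Leau--Fatou flower dynamics of its strong-separatrix holonomy prevents all nearby leaves from being closed --- Theorem~\ref{teo-cerveau-linsneto} forbids $V$ from being semi-analytic; hence $V$ is transcendent. By Corollary~\ref{cor-transcendent} its subjacent $1$-form $\omega$ is integrable, and by Proposition~\ref{prop-levi-flat-foliation} it defines a real analytic Levi-flat foliation $\F$ whose kernel foliation is $\G$. Now I invoke \cite[Lem.~1]{belko2003}, which asserts precisely that a saddle-node admitting a tangent Levi-flat foliation is analytically normalizable. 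This closes the forward implication.

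For the converse, assume $\G$ is analytically normalizable. Then there are \emph{analytic} coordinates $(x,y)$ in which $\G$ is given by the $1$-form $\eta$ of \eqref{eq-saddle-node}, and the closed meromorphic $1$-form $\tau$ of \eqref{eq-saddle-node-closed} defines $\G$. The real meromorphic $1$-form $\re(\tau) = (\tau + \bar{\tau})/2$, obtained by setting $\alpha = 1$ in \eqref{eq-meromorphic-levi}, defines a real analytic Levi-flat foliation $\F$ tangent to $\G$ by \cite[Ex.~5.1]{mol-fernandez-rosas2020}. It remains to exhibit an actual pfaffian hypersurface inside $\F$: I take a leaf of $\F$ and let $V$ be its maximal integral extension away from $\sing(\omega)$, where $\omega$ is the real analytic $1$-form (without common factors) defining $\F$. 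Because $V$ is a codimension-one integral variety of a non-singular real analytic $1$-form on $W = U \setminus \sing(\omega)$ and is maximal, the triple $\{V,\omega,W\}$ is by definition a pfaffian hypersurface to which $\G$ is tangent, and $0 \in \ov{V}$ provided we choose a leaf accumulating at the origin.

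The main delicate point is the last one: ensuring the chosen leaf of $\F$ produces a \emph{nontrivial germ} of pfaffian hypersurface, that is, one whose closure contains $0 \in \C^2$. I expect this to follow from the explicit first integral $xy^{-\mu}e^{\frac{1}{k}y^{-k}}$ underlying $\tau$, whose modulus furnishes real-analytic level hypersurfaces; among these there are leaves accumulating at the origin along the strong separatrix, by the flower dynamics of the holonomy map $\Phi(z) = z + \lambda z^{k+1} + \cdots$. Note that the statement only claims tangency to \emph{some} pfaffian hypersurface, not to a Rolle one, so no Rolle-Khovanskii verification is required here --- that refinement is deferred to Proposition~\ref{prop-saddle-node-rolle}. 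Consequently the converse reduces to confirming that one such level hypersurface germifies nontrivially at $0$, which the flower structure guarantees.
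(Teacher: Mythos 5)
Your proposal is correct and follows essentially the same route as the paper's own treatment (the paragraphs preceding the statement): the forward direction via non-existence of a meromorphic first integral, hence transcendence of $V$ by Theorem \ref{teo-cerveau-linsneto}, then Proposition \ref{prop-levi-flat-foliation} and \cite[Lem.~1]{belko2003}; the converse via the normal form, the closed meromorphic $1$-form $\tau$ of \eqref{eq-saddle-node-closed}, and \cite[Ex.~5.1]{mol-fernandez-rosas2020}. Your additional verification that some leaf of the resulting Levi-flat foliation accumulates at the origin (so the germ of pfaffian hypersurface is nontrivial) is left implicit in the paper, and your argument for it --- accumulation on the strong separatrix forced by the flower dynamics, together with invariance of the closure --- is sound.
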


Let us consider a saddle-node singularity in its normal form \eqref{eq-saddle-node}. The holonomy of its strong separatrix $y=0$  with
respect to the section $\Sigma = \{(1,y); |y| < 1\}$ is defined
by the lifting along the leaves of the curve $\gamma(t) = (e^{2 \pi i t},0)$.
This holonomy map can be calculated  by taking  the flow at   time
$t=1$ of the real vector field defined by $2 \pi i g(y)$, where $g(y) = y^{k+1}/ (1 + \mu y^{k})$.
In coordinates $y = u + iv$,   this vector field is
\[ \frac{\pi }{2} \left( i (g(y) - \xbar{g(y)}) \frac{\dr}{\dr u} +     (g(y) + \xbar{g(y)}) \frac{\dr}{\dr v} \right) .\]
It defines, on $\Sigma$, a real-analytic foliation $\hat{\F}$, also given by the $1$-form
$\vartheta = \xbar{g(y)} \dd y  + g(y) \dd \bar{y}$.
On the other hand, from \eqref{eq-meromorphic-levi},   the Levi-flat foliation $\F$ restricted to $\Sigma$  is a one-dimensional
real analytic foliation $\F_{1}$
defined by
\[ \theta_{\alpha} = \frac{\alpha}{g(y)} \dd y + \frac{\bar{\alpha}}{\xbar{g(y)}} \dd \bar{y}. \]
We have that $\theta_{\alpha} \wedge  \vartheta  = (\alpha - \bar{\alpha}) \dd y \wedge \dd \bar{y}$.
Hence  $\F_{1}$ coincides with $\hat{\F}$ if and only if $\alpha \in \R$.
Leau-Fatou Flower Theorem says that the holonomy map of the strong separatrix has the structure of flower of $2k$ petals, corresponding to $k$ attracting directions
and $k$ repelling directions. This says that $\hat{\F}$ has a local
Bendixson sectorial decomposition  (see \cite[Th. 9.14]{ilyashenko2008})    formed by $2k$ elliptic sectors, separated by $2k$ parabolic sectors.
In particular, it is non-monodromic, saying that its leaves have the Rolle-Khovanskii property.
We should remark that if $\F_{1}$ and $\hat{\F}$ are different, then the leaves of $\F$ are not Rolle.
Indeed,   two non-coinciding  leaves of $\F_{1}$ and of $\hat{\F}$ intersecting at one point would   have infinitely many
points of intersection, bringing a contradiction with the finiteness Theorem.

Next we give an  example of a saddle-node singularity having an invariant   Rolle-pfaffian    hypersurface.
We will see in  Proposition \ref{prop-saddle-node-rolle} below   that this is the only case.
\begin{example}
\label{example-saddlenode-role}
{\rm
Suppose that, in \eqref{eq-saddle-node},  the weak multiplier satisfies $\mu \in \R$. Setting $\alpha = 1$ in \eqref{eq-meromorphic-levi}, we have
  \[\re(\tau)   =  \frac{1}{2} \left(
  \mu \frac{ \dd |y|^{2} }{|y|^{2}}
  - \frac{ \dd |x|^{2} }{|x|^{2}}
  -  \dd \left(  \frac{1}{k y^{k}}
  + \frac{1}{k \bar{y}^{k}} \right)
   \right)
  = \dd \left( \mu  \log |y|  -   \log |x| -    \re \left(\frac{1}{k y^{k}} \right) \right)
  \]
Hence, the function
\[ F(x,y) =  \mu  \log |y|  -   \log |x| -    \re \left(\frac{1}{k y^{k}} \right) ,\]
which is defined and real analytic outside the coordinate axes, is a first integral for the Levi-flat foliation $\F$ defined by
$\re(\tau)$, whose leaves are all
 Rolle pfaffian hypersurfaces tangent to $\G$.
 } \end{example}

Simple foliations of saddle-node type tangent to Rolle-pfaffian hypersurfaces are then characterized by the following statement:

\begin{prop}
\label{prop-saddle-node-rolle}
Let $\G$ be a holomorphic foliation having a simple saddle-node singularity at $0 \in \C^{2}$. Then    $\G$ is tangent to  a Rolle-pfaffian hypersurface if and only if both  properties below hold:
\begin{enumerate}
  \item it is analytically normalizable;
  \item its weak multiplier $\mu$ is real.
\end{enumerate}
\end{prop}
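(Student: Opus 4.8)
The plan is to treat the two implications separately, complementing the strong-separatrix analysis already developed above with a parallel one on the weak separatrix. For the sufficiency, if $\G$ is analytically normalizable with $\mu \in \R$, I would pass to normalizing coordinates and set $\alpha = 1$ in \eqref{eq-meromorphic-levi}; Example \ref{example-saddlenode-role} then exhibits the real-analytic first integral $F$ of a Levi-flat foliation $\F$ tangent to $\G$ whose levels are Rolle-pfaffian hypersurfaces tangent to $\G$, at least one of which accumulates at the origin. This is all that is required for this direction.

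For the necessity, assume $\G$ is tangent to a Rolle-pfaffian hypersurface $V$. By the discussion preceding Proposition \ref{prop-saddle-node-pfaffian}, the nonexistence of a meromorphic first integral (the flower structure of the strong-separatrix holonomy together with Theorem \ref{teo-cerveau-linsneto}) forces $V$ to be transcendent; Proposition \ref{prop-levi-flat-foliation} then makes $\omega$ define a Levi-flat foliation with kernel $\G$, and \cite[Lem.~1]{belko2003} yields analytic normalizability, which is $(1)$. In normalizing coordinates $\G$ is defined by $\tau$ of \eqref{eq-saddle-node-closed} and the tangent Levi-flat foliation $\F$ by $\re(\alpha\tau)$ for some $\alpha \in \C^{*}$. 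Restricting to $\Sigma = \{(1,y)\}$, the identity $\theta_{\alpha}\wedge\vartheta = (\alpha-\bar\alpha)\,\dd y\wedge\dd\bar y$ from the paragraph before Example \ref{example-saddlenode-role} gives $\F_{1} = \hat{\F}$ if and only if $\alpha\in\R$; and if $\F_{1}\neq\hat{\F}$, a leaf of $\F_{1}\subset V$ would meet a leaf of the Rolle foliation $\hat{\F}$, transverse to $\omega$, in infinitely many points (the flower orbits accumulate at $0$), contradicting the Finiteness Theorem \ref{teo-finiteness}. Hence $\alpha\in\R$, and after rescaling I take $\alpha = 1$.

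The essential new step is property $(2)$. I would examine a section $\Sigma' = \{(x,y_{0})\}$ transverse to the weak separatrix $\{x=0\}$, where $\dd y = 0$ gives $\re(\tau)|_{\Sigma'} = -\dd\log|x|$, so that $\F|_{\Sigma'}$ is the foliation by circles $\{|x| = \mathrm{const}\}$. Since the convergent weak separatrix has holonomy $\Phi(x) = e^{2\pi i\mu}x + \cdots$, and $\Phi$, being a holonomy of $\G$, preserves $\F|_{\Sigma'}$ and maps $V\cap\Sigma'$ into itself, while $V\cap\Sigma'$ is a union of whole circles (each circle of $\F|_{\Sigma'}$ through a point of $V$ is tangent to $\F$, hence lies in the leaf $V$), a circle $\{|x| = r_{0}\}\subset V\cap\Sigma'$ forces all its images $\Phi^{n}(\{|x|=r_{0}\}) = \{|x| = \psi^{n}(r_{0})\}$, with $\psi(r)\sim |e^{2\pi i\mu}|\,r$, to lie in $V$. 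If $\mu\notin\R$ then $|e^{2\pi i\mu}|\neq 1$, producing infinitely many distinct circles in $V\cap\Sigma'$ accumulating at $x=0$; a radial ray in $\Sigma'$ is an analytic curve transverse to $\omega$ crossing each of them once, so it meets $V$ infinitely often, contradicting that $V$ is Rolle. Therefore $\mu\in\R$.

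I expect the main obstacle to be ensuring this last argument has content, namely that $V$ genuinely accumulates on the weak separatrix so that $V\cap\Sigma'$ is a nonempty union of circles rather than a thinner invariant set. I would settle this by choosing $y_{0}$ in a sector where $\re(1/(ky_{0}^{k}))$ is strongly negative, so that the level-set equation for $F$ forces a small circle of $V$ on $\Sigma'$, and by using the tangency argument above to conclude that such a circle lies entirely in $V$. Everything preceding this — transcendency, normalizability, and $\alpha\in\R$ — is already assembled in the text, so the genuinely new work is isolating the weak-separatrix mechanism that detects $\im\mu\neq 0$.
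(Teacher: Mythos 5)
Your proposal reproduces the paper's own proof almost verbatim: sufficiency from Example \ref{example-saddlenode-role}; item (1) from Proposition \ref{prop-saddle-node-pfaffian} (transcendence, Proposition \ref{prop-levi-flat-foliation}, and \cite[Lem.~1]{belko2003}); the reduction to $\alpha=1$ from the comparison of $\F_{1}$ with $\hat{\F}$ and the Finiteness Theorem; and the contradiction for $\mu\notin\R$ from a section transverse to the weak separatrix, on which $\F$ traces concentric circles that the weak holonomy, with linear coefficient $e^{2\pi i\mu}$ of modulus $\neq 1$, propagates into infinitely many circles crossed by a radial line. Up to the last step this is exactly the argument in the text.

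The genuine problem is the step you flag yourself and then try to patch: that $V$ actually meets the transversal $\Sigma'$. Your patch is circular: ``the level-set equation for $F$'' invokes the single-valued first integral of Example \ref{example-saddlenode-role}, which exists precisely because $\mu\in\R$ --- the statement under proof; when $\mu\notin\R$ the primitive $\re\bigl(\mu\log y-\tfrac{1}{ky^{k}}-\log x\bigr)$ is multivalued, its branches differing by integer multiples of $2\pi\im(\mu)$, and no such level-set description of $V$ is available. Worse, the gap cannot be closed by a cleverer choice of $y_{0}$: for \emph{every} $\mu$, the foliation $\F$ defined by $\re(\tau)$ has leaves that meet no section $\{y=y_{0}\}$ at all. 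Indeed, for a very negative value $c$ of a branch of the primitive, the corresponding leaf is a graph $|x|=h(y)$ over a ``petal'' region pinched inside a sector where $\re(1/(ky^{k}))\gg 0$, confined to $|y|\lesssim |c|^{-1/k}$, and accumulating only at the origin and along the strong separatrix; such a leaf is embedded, closed in $W$, and separating ($W\setminus V$ has exactly two connected components with common boundary $V$), hence Rolle by the separating criterion (i) quoted in Section 2 --- irrespective of whether $\mu\in\R$. Consequently an argument that only examines leaves intersecting a weak transversal (yours, and for that matter the paper's, whose proof speaks only of ``a Rolle-pfaffian leaf intersecting $\Sigma$'') cannot by itself force $\mu\in\R$: to complete the necessity one must either prove that an invariant Rolle-pfaffian hypersurface as in the statement must wind around the weak separatrix, or explain why these petal-type Rolle leaves are excluded from the statement's hypothesis. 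As written, your proof of the necessity of (2) is incomplete at exactly this point.
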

\begin{proof} We dealt with the question of analytic normalization in Proposition \ref{prop-saddle-node-pfaffian}.
It remains to consider the part of the statement concerning the weak multiplier. The  sufficiency  of $\mu \in \R$
is given by Example \ref{example-saddlenode-role}.
In order to prove the necessity, we suppose that $\mu \in \C \setminus \R$.
We are admitting that $\G$ is tangent to a pfaffian hypersurface, thus it is
tangent to a  Levi-flat foliation $\F$ defined, in normalizing holomorphic
coordinates $(x,y)$, by the real meromorphic $1$-form \eqref{eq-meromorphic-levi},
for some $\alpha \in \C^{*}$. We have seen above that if $\alpha \in \C \setminus \R$, then $\G$ is not tangent
to a Rolle-pfaffian hypersurface.
Thus, we can suppose that $\alpha = 1$.
Take $q = (0,1)$ in the weak separatrix and the section
$\Sigma = \{(x,1); |x| < 1\}$.
Then $\F$ restricted to $\Sigma$ is
defined by     $\bar{x} \dd x +  x  \dd \bar{x}$, which engenders a center type dynamics,
having as orbits circles centered at $0 \in \C$. Since the holonomy map of the weak separatrix   has $e^{2 \pi i \mu}$
as linear coefficient, any pfaffian leaf must contain infinitely many of these circles.
However, a Rolle-pfaffian leaf intersecting $\Sigma$ must consist of a single circle, otherwise
the Rolle condition would be violated, by taking in $\Sigma$, for instance,   a straight line through the origin as a curve  transverse to the foliation $\F$.
This  finishes the proof of the Proposition.
\end{proof}

\subsection{Simple resonant singularities}

The  only case not treated so far involves simple singularities with eigenvalue ratio $\lambda   \in \Q^{-}$.
These singularities are called \emph{resonant}.
Let us write $\lambda = \frac{p}{q}$, with $p,q \in \Z_{+}$ relatively prime.
If $\G$ is tangent to a germ of semi-analytic Rolle-pfaffian hypersurface, then the first integral
given by Cerveau-Lins Neto theorem must be holomorphic and $\G$ is analytically equivalent
to a linear $1$-form of the type
$\eta =  py  \dd x + qx \dd y$ \cite[Theorem ${\rm B}_{0}$]{mattei1980}.
Such  $\eta$ has the function $x^{p}y^{q}$ as a holomorphic first integral.

Let us then suppose  that $\G$ is tangent to a transcendent Rolle-pfaffian hypersurface. Hence,   Proposition \ref{prop-levi-flat-foliation}
gives that
$\G$ is tangent to a Levi-flat foliation.
By \cite[Lem. 1]{belko2003}, the holonomy of some of the separatrices of $\G$ is analytically normalizable,
implying that $\G$  itself is analytically normalizable \cite{martinet1983}. This means that  $\G$ is either defined by a linear
$1$-form as above  or
by a $1$-form of the type
\begin{equation}
\label{eq-resonant}
\eta = py(  1 +  (\mu -1) ( x^{p}y^{q})^{k} ) \dd x + q x(  1 + \mu (x^{p}y^{q})^{k} ) \dd y ,
\end{equation}
where  $k \geq 1$ and $\mu \in \C$.
This $\eta$ has an integrating factor, namely, the meromorphic $1$-form
\begin{equation}
\label{eq-resonant-closed}
 \tau = \frac{\eta}{xy(x^{p}y^{q})^{k}} =   (\mu-1) p \frac{ \dd x}{x} + \mu q \frac{\dd y}{y} - \dd \left( \frac{1}{k(x^py^q)^{k}} \right)
\end{equation}
is closed.

Let  $\mathcal F$ be the Levi-flat foliation defined by the real 1-form $\omega=\re(\tau)$.
The foliation $\mathcal F$ induces a real one-dimensional singular foliation   $\mathcal F_1$ on the disc  $\Sigma=\{(1,y)\colon |y|<r\}$, $r>0$. If we consider the parametrization
$\psi\colon y\mapsto (1,y)\in \Sigma$, the foliation $\mathcal F_1$ is defined by the real 1-form
$$\omega_1 = \psi^*(\omega) = \re \left\{ \left( \frac{\mu q}{y} + \frac{q}{y^{qk+1}} \right) \dd y \right\}.$$
From this we can see that  $\mathcal F_1$ is also defined by the real flow of the vector field
 $$X=i\frac {y^{qk+1}}{(1+\mu y^{qk})}\frac\partial{\partial y}.$$
The foliation  $\mathcal F_1$ has the multivalued first integral
$$\re\left( \mu q \log y -\frac 1{ky^{qk}}\right),$$
 which produces,
if  $\mu\in\mathbb R$, the real analytic first integral, for $y \neq 0$,
$$f(y) = \mu q\log |y|-\re\left( \frac 1{ky^{qk}}\right).$$
If $\mu \not\in \mathbb R$, we  can still find  a $C^\infty$  first integral
outside the coordinate axes, as we see next.
We will need a    lemma:
\begin{lem} If $r$ is small enough, there exists a $C^\infty$ submersion  $$f\colon\{0<|y|<r\}\to \mathbb R$$
with the following properties:
\begin{enumerate}
\item  $f$ is first integral of $\mathcal F_1$;
\item $f\left(y\right)=g(y^{qk})$ for some  $C^\infty$ submersion $g$.
\end{enumerate}
\end{lem}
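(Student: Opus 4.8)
The plan is to reduce everything to the single variable $w=y^{qk}$ and to solve the problem there. First I would observe that the rotation $y\mapsto \zeta y$ with $\zeta^{qk}=1$ leaves $\omega_1$ invariant, so that $\mathcal F_1$ is the pull-back $\pi_0^{*}\G_1$ of a real-analytic foliation $\G_1$ under the $qk$-fold covering $\pi_0\colon y\mapsto w=y^{qk}$ of punctured discs. A direct computation then shows that $\G_1$ is defined by the closed real $1$-form $\tilde\omega=\re\{(\mu/w+1/w^{2})\dd w\}=\dd H$, whose multivalued primitive is $H=\re(\mu\log w-1/w)=\re(\mu)\log|w|-\im(\mu)\arg w-\re(1/w)$. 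Since $\pi_0$ is a holomorphic submersion on $\{0<|y|<r\}$, it suffices to produce a $C^{\infty}$ submersion $g$ on a punctured disc $\{0<|w|<\rho\}$ that is a first integral of $\G_1$; then $f(y):=g(y^{qk})$ is automatically a $C^{\infty}$ submersion and a first integral of $\mathcal F_1$, giving property $(1)$, while property $(2)$ holds by construction.

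Next I would describe $\G_1$ geometrically. On the punctured disc the defining form is nonsingular (its only zero is at $w=-1/\mu$, which we remove by taking $\rho<1/|\mu|$), so $\G_1$ is a regular foliation of an annulus carrying an isolated flower-type singularity at the excluded point $w=0$. Exactly as in the saddle-node discussion (Leau--Fatou, now with effective multiplicity one), the leaves are noncompact petals, each limiting to $w=0$ at both ends. The decisive obstruction is the period of $\tilde\omega$: one has $\oint_{|w|=\varepsilon}\tilde\omega=\re(2\pi i\mu)=-2\pi\,\im(\mu)\neq0$ precisely because $\mu\notin\R$. Hence $H$ is genuinely multivalued, $\tilde\omega$ is closed but not exact, and $g=H$ is unusable; this is exactly why no real-analytic first integral exists and only a $C^{\infty}$ one can be hoped for.

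Finally I would build $g$ by exploiting that the leaves are noncompact. Because each petal runs into the puncture at both ends, the petals can be totally ordered, so the leaf space is parametrized by a single real parameter rather than by a circle, and a single-valued real leaf coordinate exists. Concretely I would take a global transversal $T$ meeting every leaf exactly once --- for instance built from the single-valued function $P=\re(\bar\mu(\mu\log w-1/w))=|\mu|^{2}\log|w|-\re(\bar\mu/w)$, whose level curves are transverse to $\G_1$ since $\mu\notin\R$ --- and let $g(w)$ be a smooth coordinate read off along the leaf through $w$ at its intersection with $T$ (equivalently, an asymptotic invariant of the leaf extracted as it approaches $w=0$). Single-valuedness follows because the value depends only on the leaf, the submersion property follows from transversality, and $C^{\infty}$ regularity follows from the regularity of $\G_1$ on the annulus together with smooth dependence of leaves on initial conditions. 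The main obstacle, and the technical heart of the argument, is precisely the nonzero monodromy $-2\pi\,\im(\mu)$: one must check that the transversal really meets each leaf a single time and that the induced leaf coordinate extends smoothly, though not analytically, up to the singular directions where the petals accumulate at the puncture; this is where the dominance of the pole term $-\re(1/w)$ over the multivalued logarithmic term $-\im(\mu)\arg w$ must be used to control the gluing.
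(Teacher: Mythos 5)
Your reduction to the variable $w=y^{qk}$ is sound and coincides with the paper's first step (the paper phrases it as $X=\phi^*\bigl(iqk\,\tfrac{w^{2}}{1+\mu w}\,\tfrac{\partial}{\partial w}\bigr)$ with $\phi(y)=y^{qk}$), and your period computation $\oint\tilde\omega=-2\pi\,\im(\mu)\neq 0$ correctly identifies why no single-valued analytic primitive can exist. The genuine gap is in your third paragraph, where $g$ is supposed to be produced: you never construct it, and the mechanism you propose --- one connected transversal $T$ meeting every leaf exactly once, with $g$ read off as a leaf coordinate on $T$ --- fails on a full punctured disc. This is already visible in the model $\re\{w^{2}\,\partial/\partial w\}$, whose leaves are the connected components of the level sets of $\im(1/w)$, i.e.\ arcs of circles through the origin: any curve transverse to this foliation of dipole type, and in particular any level curve of your function $P$ (which is again the real part of $({\rm const})\log w+({\rm const})/w$, hence itself a petal), is confined to roughly one half-plane, so it misses all leaves of $\G_1$ lying in the ``opposite'' petal region that exit the disc; on those leaves $g$ is simply undefined. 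Moreover, since the deck transformation in the $F$-plane ($F=\mu\log w-1/w$) translates lifts by $2\pi i\mu$, a single leaf of $\G_1$ downstairs could a priori meet $T$ at several points, one for each deck translate of $T$; ruling this out, and extending the leaf coordinate beyond the saturation of $T$, is precisely what you defer to ``the technical heart of the argument''. So the proposal is a program whose essential step is missing, and whose stated implementation would not work as written.

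For contrast, the paper fills exactly this hole by citing a known classification result: for all $\lambda\in\C$ the real phase portraits of $X_\lambda=\frac{w^{2}}{1+\lambda w}\,\partial/\partial w$ near the origin are equivalent by a homeomorphism $\Phi$ that is a $C^\infty$ diffeomorphism outside the origin. Granting this, it transports the explicit global first integral $\im(1/w)$ of the model $w^{2}\,\partial/\partial w$ and sets $g=\im(1/\Phi)$; all the global issues your sketch leaves open (the two-petal structure, leaves missed by transversals, multiple intersections, smoothness near the singular directions) are absorbed into that one cited equivalence. A self-contained argument along your lines would have to prove a statement of comparable strength anyway --- e.g.\ establish the sectorial decomposition of $\G_1$ and glue local leaf coordinates sector by sector --- rather than define $g$ from a single transversal.
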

\proof If we consider $\phi(y)=y^{qk}$,  we see that
$$X=\phi^*\left(iqk \frac {y^{2}}{(1+{\mu} y)}\frac\partial{\partial y}\right).$$
The vector field $iqk\frac {y^{2}}{(1+{\mu} y)}\frac\partial{\partial y}$ is analytically conjugated near the origin to
a vector field of the form   $X_{\lambda}= \frac {y^{2}}{(1+{\lambda} y)}\frac\partial{\partial y}$ for some $\lambda\in\mathbb C$. Moreover, it is known that the foliations near the origin defined by the real parts of two such vector fields  $X_{\lambda_1}$ and $X_{\lambda_2}$ are
equivalent by a homeomorphism that is a $C^\infty$ diffeomorphism outside the origin. In particular, the foliations near the origin of
the real parts of
$iqk\frac {y^{2}}{(1+{\mu} y)}\frac\partial{\partial y}$ and  $y^2\frac\partial{\partial y}$ are equivalent by a homeomorphism $\Phi$
that is a $C^\infty$ diffeomorphism outside the origin. Thus, composing $\Phi$  with the first integral
$\im (1/y)$ of
$\re \left\{ y^2\frac\partial{\partial y} \right\}$
we obtain a $C^\infty$ submersion $g=\im (1/\Phi)$ that is a  first integral of
$\re \left\{iqk\frac {y^{2}}{(1+{\mu} y)}\frac\partial{\partial y} \right\}$
in a punctured neighborhood of the origin.  Therefore  $f(y)=g(y^{qk})$ is a first integral of  $\mathcal F_1$.\qed

A $C^{\infty}$ first integral is then produced by extending the function $f$ along the leaves of $\F$:
\begin{prop} There exists a $C^\infty$ submersion that is a first integral  of $\mathcal F$ in the complement of
$\{xy=0\}$ in a neighborhood of
$0\in\mathbb C^2$.
\end{prop}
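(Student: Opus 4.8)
The plan is to produce the first integral $F$ by transporting the function $f$ of the Lemma along the leaves of the holomorphic foliation $\G$ back to the section $\Sigma=\{(1,y)\colon|y|<r\}$. The starting observation is that $\tau$ in \eqref{eq-resonant-closed} is \emph{exact}: writing $u=x^{p}y^{q}$ and $P(u)=\mu\log u-\tfrac{1}{ku^{k}}$, a direct check using $\tfrac{\dd u}{u}=p\tfrac{\dd x}{x}+q\tfrac{\dd y}{y}$ gives $\tau=\dd G$ with $G=P(u)-p\log x$, a multivalued holomorphic first integral of $\G$, and $\omega=\re(\tau)=\dd(\re G)$. For $(x,y)$ in a small punctured neighborhood I would follow the leaf of $\G$ through $(x,y)$ until it meets $\Sigma$ at a point $(1,y')$, i.e.\ I solve $P((y')^{q})=P(u)-p\log x$ by continuous lifting, and then set $F(x,y):=f(y')$. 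Every leaf does reach $\Sigma$: since the target value $P(u)-p\log x$ has large real part when $(x,y)$ is close to $0$, and $\re P(u')\to+\infty$ as $u'\to0$, the equation has a solution with $y'$ small.

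The heart of the argument is well-definedness, and here the key is to read $P$ as the complexified time of the vector field $V=\frac{u^{k+1}}{1+\mu u^{k}}\der{u}$, for which $\dd P(V)\equiv 1$. The two sources of ambiguity in the transport are loops around $\{y=0\}$ and around $\{x=0\}$. The first only moves $y'$ around the puncture of $\Sigma$ and is harmless because $f$ is single valued on $\{0<|y|<r\}$. The second changes the chosen branch of $\log x$ by $2\pi i$, hence shifts the target value of $P$ by $-2\pi i\,p$; the corresponding change of $y'$ is the time $-2\pi i\,p$ flow of $V$, that is the real time $-2\pi p$ flow of $iV$. Since $\F_1$ is precisely the foliation by trajectories of $iV$ (equivalently, the level sets of $\re P$), this displacement keeps $y'$ on its own leaf of $\F_1$, on which $f$ is constant. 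Thus $F$ does not depend on the branch, and the potential monodromy is absorbed exactly by the first integral supplied by the Lemma.

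It then remains to check that $F$ is a first integral of $\F$ and a submersion. If $(x_{1},y_{1})$ and $(x_{2},y_{2})$ lie on the same leaf of $\F$, then $\re G$ takes the same value at both points; since $\re P(y_{i}'^{\,q})=\re\big(G(x_{i},y_{i})\big)$, the transported points $y_{1}',y_{2}'$ share the same value of $\re P$ and hence lie on the same leaf of $\F_1$, giving $f(y_{1}')=f(y_{2}')$. Therefore $F$ is constant on the leaves of $\F$. Finally $F$ is the composition of the transport map $(x,y)\mapsto y'$, which is a real-analytic submersion (the holonomy of the real-analytic foliation, obtained by the implicit function theorem applied to $P((y')^{q})=P(u)-p\log x$), with the $C^{\infty}$ submersion $f$ of the Lemma; as distinct nearby leaves of $\F$ receive distinct values of $f$, the composition is a $C^{\infty}$ submersion in the complement of $\{xy=0\}$.

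I expect the main obstacle to be the well-definedness step, i.e.\ verifying that the $x$-loop ambiguity is genuinely tangent to $\F_1$ rather than merely preserving its leaf space up to a reparametrization; this is what the flow interpretation of $P$ and the identity $\dd P(V)=1$ are designed to settle. A secondary technical point is ensuring that the relevant flow segments of $iV$, and the transport itself, stay inside the fixed punctured disc; this forces one to shrink the ambient neighborhood relative to the radius $r$ of $\Sigma$, using that large values of $\re P$ correspond to small $|y'|$, so that both endpoints of each flow segment remain near the puncture.
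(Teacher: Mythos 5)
Your strategy --- transporting the first integral $f$ of the Lemma along the leaves of $\G$ back to the section $\Sigma$ --- is essentially the paper's (the paper phrases the transport via the holonomy $h$ of the separatrix $\{y=0\}$ rather than via the primitive $G$ of $\tau$), but the well-definedness step, which you correctly single out as the heart of the matter, has a genuine gap. The displacement of $y'$ produced by the loop around $\{x=0\}$ is \emph{not} the lift through the covering $y\mapsto y^{q}$ of the complex-time $-2\pi i p$ flow of $V$. Along the lifted loop one has $x(t)=e^{2\pi i t}$, hence $(y(t))^{q}=e^{-2\pi i p t}\,u(t)$, so the continuous $q$-th root picks up the deck transformation $y\mapsto e^{-2\pi i p/q}\,y$ \emph{in addition to} the lifted flow. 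This is exactly what the paper's Appendix computes: the holonomy is $h=e^{-2\pi i (p/q)}\exp\left[2\pi(p/q)X\right]$. The factor $\exp\left[2\pi(p/q)X\right]$ does preserve each leaf of $\F_1$, but the rotation $e^{-2\pi i(p/q)}$ does not: it preserves the foliation $\F_1$ (because $X$ is invariant under rotations by $qk$-th roots of unity) while permuting its leaves --- it moves the petals of the sectorial structure to other petals whenever $q\geq 2$. So the claim that the displacement ``keeps $y'$ on its own leaf of $\F_1$'' is false in general, and with it your conclusion $f(y'')=f(y')$.

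The statement you need is nevertheless true, but it requires property (2) of the Lemma, $f(y)=g(y^{qk})$, which your proposal never invokes --- and this property was built into the Lemma precisely for this purpose. Since $\left(e^{-2\pi i p/q}\right)^{qk}=1$, the factorization makes $f$ invariant under the offending rotation; equivalently, $f$ descends through $y\mapsto u=y^{q}$ to the function $u\mapsto g(u^{k})$, and then your $u$-space argument (constancy of $\re P$ along the complex-time $-2\pi ip$ flow of $V$) does close the proof. This is exactly the paper's computation $f\circ h=g\circ h^{qk}=f\circ\exp\left[2\pi(p/q)X\right]=f$. Two secondary remarks: your existence claim (``every leaf reaches $\Sigma$'') and the assertion that all ambiguities are generated by the two coordinate loops correspond to the paper's properties (1) and (2), which are obtained only after shrinking $r$, so they should be stated as such rather than derived from ``large real part of $P$'' (the sign of $\re\left(1/(ku^{k})\right)$ oscillates with the direction of approach); the final points about $F$ being a first integral and a submersion are fine.
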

\proof Consider the set $U=\{0<|x|\le 1, 0<|y|< r\}$. If $r$ is small enough,
the holonomy of the separatrix $\{y=0\}$ can be computed in
$\Sigma$ and is given (see the Appendix) in the form $(1,y)\mapsto (1,h(y))$,  where
$$h=e^{-2\pi i (p/q)}\exp\left(2\pi i(p/q)\frac{y^{qk+1}}{1+\mu y^{qk}}\frac \partial{\partial y}\right)=
e^{-2\pi i (p/q)}\exp\left[2\pi  (p/q)X\right].$$
 By reducing $r$ if necessary, it is not difficult to prove that  the following properties hold:
\begin{enumerate}
\item The leaves of $\eta|_U=0$ intersect $\Sigma$.
\item If a leaf of $\eta|_U=0$ intersects $\Sigma$ at two points $(1,y)$ and $(1,y')$, there exists
$n\in\mathbb Z$ such that $y'=h^{\circ n}(y)$, where $h^{\circ n}$ denotes the $n$-th iteration of $h$.
\end{enumerate}
 Let  $\zeta\in U$ and let $L$  be the leaf
of the foliation $\eta|_U=0$ through $\zeta$. Then we choose any $(1,y)\in L\cap \Sigma$ and define    $F(\zeta)=f(y)$.
If well defined, it is clear that $F$  will be a  $C^\infty$ submersive  first integral of  $\mathcal F|_U$.  If $(1,y')$
is any other point in  $L\cap \Sigma$, we have that $y'=h^{\circ n}(y)$ for some  $n\in\mathbb Z$, so $F$ will be well defined if we show that $f\circ h=f$, which we do next. We have that
\begin{align*} f\circ h = g\circ h^{qk}=
g\circ \left[\exp\left(2\pi (p/q)X\right)\right]^{qk}\\
=f\circ\exp\left[2\pi (p/q)X\right]=f,
\end{align*}
because  $f$ is first integral of $X$. \qed

The existence of the first integral $f$, even if it is only of class $C^\infty$, guarantees that the leaves of  $\mathcal F|_U$ satisfy the property of  Rolle-Khovanskii.
Finally, the discussion above can be summarized in the following statement.
\begin{prop}
\label{prop-resonant-rolle}
Let $\mathcal G$  be a holomorphic foliation having a simple resonant non-lineariza\-ble singularity
at $0\in\mathbb C^2$. Then $\mathcal G$ is tangent to a Rolle-pfaffian hypersurface if and only if $\mathcal G$ is analytically
normalizable. In this case $\mathcal G$ is tangent to a Levi-flat foliation with Rolle-Pfaffian leaves  outside the separatrices of $\G$.
\end{prop}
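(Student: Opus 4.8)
The plan is to prove the two implications separately, reassembling the ingredients developed above.

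For the direct implication I would observe first that, since $\G$ is assumed non-linearizable, any tangent Rolle-pfaffian hypersurface $V$ must be transcendent: a semi-analytic $V$ would, by Theorem~\ref{teo-cerveau-linsneto}, force a meromorphic — hence holomorphic — first integral, and for a resonant singularity this yields linearization by \cite[Theorem~${\rm B}_{0}$]{mattei1980}, contrary to hypothesis. For a transcendent $V$, Proposition~\ref{prop-levi-flat-foliation} realizes $\G$ as the kernel foliation of a Levi-flat foliation; then \cite[Lem.~1]{belko2003} gives analytic normalizability of the holonomy of some separatrix, which \cite{martinet1983} promotes to analytic normalizability of $\G$ itself. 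This is exactly the argument sketched in the paragraph preceding the Lemma.

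For the converse I would start from the normal form \eqref{eq-resonant} in analytic coordinates $(x,y)$ together with its closed meromorphic integrating factor $\tau$ of \eqref{eq-resonant-closed}. By Theorem~\ref{teo-Levi-closed}, the real $1$-form $\omega=\re(\tau)$ defines a Levi-flat foliation $\F$ whose kernel foliation is $\G$, so $\F$ is tangent to $\G$. The whole matter then reduces to showing that the leaves of $\F$, away from the separatrices $\{xy=0\}$, are Rolle-pfaffian. This is precisely where the preceding Proposition enters: it supplies a $C^{\infty}$ submersive first integral $F$ of $\F$ on the complement of $\{xy=0\}$. Given such an $F$, I would argue that $\dd F=\rho\,\omega$ for a nonvanishing factor $\rho$ (the submersivity forces $\dd F\neq 0$, while the first-integral property forces $\ker\dd F\supseteq\ker\omega$), so that along any analytic curve transverse to $\omega$ the composite $F\circ\gamma$ is strictly monotone. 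Consequently such a curve meets each level set of $F$ — that is, each leaf of $\F$ — at most once, which is exactly the Rolle-Khovanskii property. This identifies the leaves of $\F$ outside $\{xy=0\}$ as Rolle-pfaffian hypersurfaces tangent to $\G$, giving both the converse and the final assertion about the Levi-flat foliation.

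The hard part will not be the assembly but securing the two features on which the converse rests. First, because the first integral from the previous Proposition is in general only of class $C^{\infty}$ (analytic precisely when $\mu\in\R$), I must make sure that submersivity \emph{alone} — without analyticity — already forbids a second transverse intersection; the monotonicity argument above is meant to cover exactly this. Second, I must check that $F$ is genuinely well defined on the entire complement of $\{xy=0\}$, which depends on the identity $f\circ h=f$ for the holonomy $h$ of the separatrix $\{y=0\}$, established in the previous Proposition through the relation $f(y)=g(y^{qk})$ of the Lemma. Once these two points are in place the construction closes and the proposition follows.
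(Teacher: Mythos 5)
Your proposal is correct and follows essentially the same route as the paper: the forward implication via Theorem \ref{teo-cerveau-linsneto} and Mattei--Moussu to rule out the semi-analytic case, then Proposition \ref{prop-levi-flat-foliation}, \cite[Lem.~1]{belko2003} and \cite{martinet1983}; the converse via the normal form \eqref{eq-resonant}, the closed form \eqref{eq-resonant-closed}, and the $C^\infty$ submersive first integral supplied by the preceding Lemma and Proposition. Your explicit monotonicity argument (writing $\dd F=\rho\,\omega$ with $\rho$ nonvanishing, so $F\circ\gamma$ is strictly monotone along any curve transverse to $\omega$) merely spells out the step the paper leaves implicit when it asserts that the existence of the $C^\infty$ first integral guarantees the Rolle--Khovanskii property.
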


\section{Some consequences}
\label{sec-consequences}

In this final section, our   objective is to obtain information on a germ of holomorphic foliation  $\G$ deriving from the assumption that it is
tangent to   Rolle-pfaffian hypersurfaces. In order to do that, however, we should establish a condition of tangency that works for our goals.
Let us write a few words of motivation.
In principle, since we are working with germs of holomorphic foliations,  it would be natural if we asked the origin to be an accumulation point of a Rolle-pfaffian hypersurface $V$.  Hence,  when lifting $V$ through the reduction of singularities $\pi: (\tilde{M},\D) \to (\C^{2},0)$ of $\G$, the  resulting surface $\tilde{V}$ would
accumulate in points contained in the exceptional divisor $\D$.
However, we cannot assure in general that this accumulation contains all invariant components of $\D$, even
if $\G$ is assumed to be non-dicritical. For instance,  if $\tilde{V}$  accumulates to
an elliptic singularity $p \in \D$ with positive multiplier --- $\tilde{V}$ is a nodal separator at $p$ --- then it does not accumulate in the components of $\D$ containing $p$ (see Example \ref{ex-nodal}). Another example: if $\tilde{V}$ accumulates to   a saddle-node singularity then it accumulates in  the strong separatrix but not in  the weak one, by the discussion carried out in
Subsection \ref{subsection-saddlenode}.
Thus, without further assumption, we cannot count on the fact that  $\tilde{V}$ ``spreads'' throughout the exceptional divisor $\D$.

On the other hand, it may occur that, even though $\G$ is tangent to a family of Rolle-pfaffian hypersurfaces arbitrarily near   $0 \in \C^{2}$, none
of them has the origin in its closure.
This is the case, for instance, if $\G$ is given by the level sets of a holomorphic function $f \in \cl{O}_{2}$:
it is tangent to each one of the pfaffian hypersurfaces $|f| = c$, with $c>0$.
This is also what happens for a linearizable elliptic singularity with negative multiplier.

In view of this discussion, we propose the following stronger condition of tangency:

\begin{ddef}{\rm
\label{def-Rolle-accumulation}
A germ of holomorphic foliation $\G$ at $(\C^{2},0)$ has \emph{Rolle tangency} if it is tangent to
 an integrable real analytic $1$-form $\omega$ defining a Rolle foliation.
}\end{ddef}

If $\G$ is tangent to a semi-analytic pfaffian hypersurface, then it has a meromorphic first integral, by
Theorem \ref{teo-cerveau-linsneto}. If this first integral is $f$, then $|f| = c$, with $c>0$, are leaves of
a Rolle foliation  invariant by $\G$ (defined by the $1$-form $\omega$ obtained
by cancelling the zeros and poles of $2 \re(\bar{f}\dd f) = \bar{f} \dd f + f \dd \bar{f}$). On the other hand,
if $\G$ is tangent to a transcendent pfaffian hypersurface $V$, then the real  $1$-form $\omega$ defining $V$
is integrable, inducing a Levi-flat foliation $\F$ tangent to $\G$. The condition of the definition says that $V$ and all solutions of $\omega$ are Rolle.

The discussion of Section \ref{sec-simples} allows us to identify simple singularities having Rolle
tangency:  hyperbolic singularities are excluded by Proposition
\ref{prop-hyperbolic}, elliptic singularities are linearizable by Proposition \ref{prop-elliptic}, resonant singularities
are analytically normalizable, by Proposition \ref{prop-resonant-rolle}, and saddle-node singularities are analytically normalizable and have real weak multiplier, by Proposition \ref{prop-saddle-node-rolle}.
Based on this discussion, we can then state
the following:

\begin{teo}
\label{teo-nondic-pfaffian}
Let $\G$ be a germ of   holomorphic foliation at $(\C^{2},0)$   having Rolle tangency.
Then $\G$ is defined by a closed meromorphic $1$-form and
the simple singularities of the reduction of singularities of $\G$ are of the following types:
\begin{enumerate}
  \item linearizable elliptic,
  \item  analytically normalizable resonant or
  \item  analytically normalizable saddle-nodes with real weak multiplier.
\end{enumerate}
\end{teo}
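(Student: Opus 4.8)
The plan is to establish the two conclusions separately, using the classification of simple singularities from Section \ref{sec-simples} as a black box for the second. First, for the statement that $\G$ is defined by a closed meromorphic $1$-form, I would observe that the foliation $\F$ realizing the Rolle tangency of $\G$ is a Levi-flat foliation whose kernel foliation is $\G$. Indeed, by Definition \ref{def-Rolle-accumulation}, $\G$ is tangent to an integrable real analytic $1$-form $\omega$ all of whose leaves are Rolle-pfaffian; since $\G$ has complex codimension one, its leaves are complex curves contained in the leaves of $\F$, so the latter are Levi-flat hypersurfaces and $\G$ is exactly their kernel (Levi) foliation. Theorem \ref{teo-Levi-closed} then yields at once that $\G$ is defined by a closed meromorphic $1$-form. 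When $\F$ happens to carry a semi-analytic leaf, one may instead quote Theorem \ref{teo-cerveau-linsneto} to produce a meromorphic first integral, whose differential is such a form.

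For the description of the simple models, I would fix a reduction of singularities $\pi\colon(\tilde{M},\D)\to(\C^{2},0)$, so that $\tilde{\G}=\pi^{*}\G$ carries only simple singularities, and reduce everything to a local statement at each such point $\tilde{p}$. The crux — and what I expect to be the main obstacle — is the following lifting assertion: \emph{near $\tilde{p}$ the pulled-back foliation $\tilde{\F}=\pi^{*}\F$ is again a Rolle Levi-flat foliation with kernel foliation $\tilde{\G}$, so that $\tilde{\G}$ is tangent at $\tilde{p}$ to a Rolle-pfaffian hypersurface.} That $\tilde{\F}$ is Levi-flat with kernel $\tilde{\G}$ is immediate, since $\pi$ is holomorphic and a biholomorphism off the exceptional divisor $\D$; the delicate point is the preservation of the Rolle-Khovanskii property.

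To prove the Rolle property for a leaf $\tilde{V}$ of $\tilde{\F}$ near $\tilde{p}$, I would push transverse analytic curves forward by $\pi$: being a biholomorphism off $\D$, the map $\pi$ carries an analytic arc transverse to $\tilde{\F}$ to an analytic arc transverse to $\F$, and it identifies $\tilde{V}$ with the corresponding Rolle leaf $V=\pi(\tilde{V})$ away from $\D$. Thus a transverse arc meeting $\tilde{V}$ twice would push forward to a transverse arc meeting $V$ twice, contradicting that $V$ is Rolle — provided the arc avoids $\D$. The technical heart is to dispose of the arcs that meet $\D$. Since the Rolle property germifies, it suffices to test short arcs near $\tilde{p}$; and a hypothetical failure of the property for $\tilde{V}$ would produce, by stability of transverse intersections, a whole open family of transverse arcs meeting $\tilde{V}$ twice, from which one may select an arc disjoint from the proper analytic subset $\D$. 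Pushing such an arc forward gives the contradiction and proves the lifting assertion.

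With the lifting in hand I would read off the admissible types from Section \ref{sec-simples}, applied to $\tilde{\G}$ at $\tilde{p}$: Proposition \ref{prop-hyperbolic} excludes hyperbolic singularities; Proposition \ref{prop-elliptic} forces an elliptic singularity to be linearizable, giving case (1); Proposition \ref{prop-resonant-rolle} forces a resonant singularity to be analytically normalizable, giving case (2); and Proposition \ref{prop-saddle-node-rolle} forces a saddle-node to be analytically normalizable with real weak multiplier, giving case (3). As $\tilde{\G}$ has only simple singularities, this exhausts all cases and completes the proof. The preservation of the Rolle-Khovanskii condition across the blow-ups — that is, the control of transverse curves along $\D$ — is the step I expect to demand the most care, everything else being a direct appeal to results already established.
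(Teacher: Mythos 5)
Your proof follows the same architecture as the paper's: the first conclusion is obtained by recognizing the Rolle foliation $\F$ as a Levi-flat foliation whose kernel foliation is $\G$ and quoting Theorem \ref{teo-Levi-closed} (with Theorem \ref{teo-cerveau-linsneto} covering the semi-analytic situation), and the second by transporting the Rolle tangency to each simple singularity of $\tilde{\G}=\pi^{*}\G$ and invoking Propositions \ref{prop-hyperbolic}, \ref{prop-elliptic}, \ref{prop-saddle-node-rolle} and \ref{prop-resonant-rolle}. The first part of your argument is fine. The paper disposes of the transport step in a single sentence; you rightly identify it as the point that needs an argument, but the argument you give for it has a genuine gap.

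The gap is on the side of the leaves, not of the test curves. Your push-forward rests on the claim that $\pi$ ``identifies $\tilde{V}$ with the corresponding Rolle leaf $V=\pi(\tilde{V})$ away from $\D$''. This is correct when $\tilde{V}\cap\D=\emptyset$, but it is unjustified --- and is exactly what must be proved --- when $\tilde{V}$ meets $\D$. Observe that $\pi^{*}\omega$ vanishes identically along $\D$: indeed $\pi(\D)=\{0\}$ and $0\in\sing(\omega)$, assuming as we may that $0\in\sing(\G)$, since the kernel foliation of $\F$ cannot extend regularly across a genuine singular point of $\G$. Hence the form defining $\tilde{\F}$ near a simple singularity $\tilde{p}$ is $\pi^{*}\omega$ divided by the common factor of its coefficients, and nothing in your argument prevents this reduced form from being regular at generic points of $\D$, in which case leaves of $\tilde{\F}$ contain pieces of $\D$. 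This is not a vacuous possibility: if $\tilde{\G}$ has local first integral $uv$ at $\tilde{p}$, with $\{u=0\}$ a branch of $\D$, and $\tilde{\F}$ is given by the levels of $\re(uv)$ --- a Levi-flat foliation with separating, hence Rolle, leaves --- then the zero level contains $\{u=0\}$, which does not lie in the singular set of $\dd\,\re(uv)$. For such a leaf $\tilde{V}$, the set $\tilde{V}\setminus\D$ is disconnected, and its components lie a priori over \emph{different} leaves of $\F$, glued along $\D$. A transverse arc, even after your perturbation off $\D$, meeting $\tilde{V}$ at two points situated in two such components pushes forward to a transverse arc meeting two \emph{distinct} Rolle leaves once each --- a configuration that the Rolle hypothesis downstairs does not forbid. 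So no contradiction is reached, and the Rolle property of $\tilde{\F}$ at $\tilde{p}$ does not follow. To close the argument you must show either that this gluing cannot occur under the Rolle hypothesis on $\F$, or that all components of $\tilde{V}\setminus\D$ lie over one and the same leaf of $\F$; your perturbation of the arcs off $\D$ (which is fine, by a dimension count on translates) does not address this, and it is the only substantive obstacle separating your sketch from a complete proof.
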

\begin{proof}
The  statement saying that $\G$ is defined by a closed meromorphic $1$-form is a consequence of Proposition \ref{prop-levi-flat-foliation} and of Theorems \ref{teo-cerveau-linsneto} and \ref{teo-Levi-closed}.
The hypothesis of $\G$ having the Rolle tangency says that all simple singularities of its reduction of singularities   also have  Rolle tangency.  The conclusion contained in items (1), (2) and (3) follows from the comments preceding the Theorem's statement.
 \end{proof}

A germ of holomorphic foliation $\G$ at $0 \in \C^{2}$ is \emph{logarithmic} if it is defined by a logarithmic $1$-form, i.e.
a closed meromorphic $1$-form with simple poles. This $1$-form may be written as $\tau$ in Equation \eqref{eq-closed-form},
removing the exact part.
A logarithmic foliation is  of \emph{generalized curve type}, i.e. belongs to the  family of germs of holomorphic foliations
without saddle-nodes in the
reduction of singularities   \cite{camacho1984}.
We say that $\G$ is \emph{logarithmic with real residues} if, in this writing, we
can obtain $\lambda_{i} \in \R$ for every $i=1,\ldots, k$. In this case, $\G$ is tangent to the Rolle foliation
defined by the level sets of $\varphi = |f_{1}|^{\lambda_{1}} \cdots |f_{k}|^{\lambda_{k}}$ and, according to our
definition, $0 \in \C^{2}$ has Rolle tangency. Moreover,  if we get 
$\lambda_{i} \in \Z$ for every $i=1,\ldots, k$, then $\G$ has a meromorphic first integral and the level sets of $\varphi$ are
semi-analytic. Otherwise, $\varphi$ defines transcendent Rolle-pfaffian hypersurfaces and, in this situation, $\G$ necessarily contains an elliptic singularity in its reduction of singularities.
The next theorem is a partial reciprocal for these facts:

\begin{teo}
\label{teo-logarithmic}
Let $\G$ be a germ of non-dicritical  holomorphic foliation at $(\C^{2},0)$  with  Rolle tangency.
Suppose that:
\begin{enumerate}
  \item $\G$ is of generalized curve type;
  \item $\G$ has at least one elliptic singularity in its reduction of singularities.
\end{enumerate}
 Then $\G$ is a logarithmic foliation with real residues.
\end{teo}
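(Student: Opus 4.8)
The plan is to lift everything by the (minimal) reduction of singularities $\pi:(\tilde M,\D)\to(\C^{2},0)$ and to read off the polar structure of the defining closed form from the local normal forms at the simple singularities. First I would invoke Theorem \ref{teo-nondic-pfaffian}: since $\G$ has Rolle tangency it is defined by a closed meromorphic $1$-form $\tau$, and because $\G$ is of generalized curve type there are no saddle-nodes, so every simple singularity of $\tilde\G=\pi^{*}\G$ is either linearizable elliptic or analytically normalizable resonant. Each such singularity again has Rolle tangency, so by Proposition \ref{prop-elliptic} the elliptic ones are linearizable; in particular the elliptic singularity furnished by hypothesis (2) is linearizable, and there its local model $\lambda'\,\dd u/u+\lambda''\,\dd v/v$ shows that $\tau$ has a \emph{simple} pole along each of the two components passing through it.

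The key observation is that the order of pole of $\pi^{*}\tau$ along an irreducible component of $\D$ is a well-defined valuation, hence constant along that component, and that this order is pinned down by the analytic type of the crossings lying on it. A linearizable crossing --- elliptic or resonant, with model $\lambda'\,\dd u/u+\lambda''\,\dd v/v$ --- carries a simple pole along \emph{both} branches, whereas a non-linearizable resonant point, whose closed model is \eqref{eq-resonant-closed}, has pole orders $pk+1\ge 2$ and $qk+1\ge 2$ along its two branches; thus no crossing can mix a simple with a higher-order pole. I would then run a propagation argument on the connected tree $\D$: the two components through the elliptic point carry simple poles, and whenever a component carries a simple pole every crossing on it is forced to be linearizable (a non-linearizable resonant point would demand order $\ge 2$ there), so the opposite branch at each such crossing is again simple. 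Since $\D$ together with the strict transforms of the separatrices is connected, every polar component of $\pi^{*}\tau$ has a simple pole. Consequently $\tau$ has only simple poles and no non-linearizable resonant singularity occurs; pushing down, $\G$ is logarithmic.

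It remains to make the residues real. Writing the tangent Levi-flat foliation as $\re(\alpha\tau)$ for a single $\alpha\in\C^{*}$ (Theorem \ref{teo-Levi-closed} and the ensuing discussion), I would argue exactly as in Proposition \ref{prop-hyperbolic}: restricting $\re(\alpha\tau)$ to a section transverse to one branch of a linearizable crossing with residues $\lambda',\lambda''$ yields the planar form $\re(\alpha\lambda'\,\dd u/u)$, whose orbits are circles when $\alpha\lambda'\in\R$ and logarithmic spirals (a focus) otherwise; a focus forces infinitely many intersections with a transverse line and destroys the Rolle property. Hence Rolle tangency imposes $\alpha\lambda'\in\R$ and $\alpha\lambda''\in\R$ at every crossing. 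Propagating this reality condition across the connected configuration, as before, gives $\alpha\lambda_{i}\in\R$ for the residue $\lambda_{i}$ along every separatrix. Since the elliptic point has irrational multiplier, $\G$ admits no meromorphic first integral, so its defining closed form is unique up to a constant; replacing $\tau$ by $\alpha\tau$ makes all residues real, and $\G$ is logarithmic with real residues.

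I expect the main obstacle to be the second and third steps, namely justifying that the pole order is genuinely constant along each component and that the trichotomy (simple/simple for linearizable crossings, high/high for the model \eqref{eq-resonant-closed}, nothing mixed) really holds, so that the single simple pole seeded by the elliptic point propagates through the entire divisor and kills every non-linearizable resonant point. Once this is in place, the reality of the residues in the last step is a routine repetition of the focus/center dichotomy already exploited in Proposition \ref{prop-hyperbolic}.
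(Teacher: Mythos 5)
Your proposal takes a genuinely different route from the paper's. The paper never analyzes pole orders: for each component $D\subset\D$ it studies the virtual holonomy group $\holv(\tilde{\G},D)$, proving that it is analytically linearizable, consists only of rotations, and contains an irrational rotation (seeded by the elliptic singularity, transported to neighboring components through linearizable corners and by the Dulac correspondence), and then invokes \cite[Prop. 4.4]{mol-fernandez-rosas2020} to convert these three properties into ``logarithmic with real residues''. Your scheme replaces the holonomy analysis by a direct study of the polar divisor of $\pi^{*}\tau$ together with the focus/center dichotomy for the residues. Your last step is sound: the reduction to a single $\alpha$ is correctly grounded on the fact that the elliptic point forbids a meromorphic first integral, and forcing $\alpha\lambda'\in\R$, $\alpha\lambda''\in\R$ at each crossing is exactly the argument of Proposition \ref{prop-hyperbolic}. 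If completed, your argument is more self-contained than the paper's, which delegates the final conversion to an external proposition.

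There is, however, a genuine flaw in the pole-order step, exactly where you expected trouble. The claim that a linearizable crossing ``carries a simple pole along both branches'' is false for linearizable \emph{resonant} crossings. At such a point the local foliation has the holomorphic first integral $f=u^{p}v^{q}$, so the closed meromorphic $1$-forms defining it are \emph{not} unique up to a constant: the ratio of any two of them is a meromorphic first integral, hence of the form $R(f)$ with $R$ meromorphic in one variable, so the closed forms defining the local foliation are exactly $R(f)\,\dd f/f$. Their pole orders along the two branches are $1-pk$ and $1-qk$ with $k=\ord_{0}R\in\Z$: for $k=-1$ they are $p+1$ and $q+1$, while for $k=1$ the form $\dd f$ has no pole at all. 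So nothing about the analytic type of such a crossing pins the pole order down to one, contrary to your assertion. What does hold --- and what your propagation actually needs --- is the dichotomy: since $p,q\geq 1$, the pole order along one branch equals $1$ iff $k=0$ iff the pole order along the other branch equals $1$. By contrast, at linearizable elliptic and at non-linearizable resonant points your claim is correct as stated, because there the local foliation admits no meromorphic first integral (for a simple singularity a meromorphic first integral is automatically holomorphic, and Mattei--Moussu then forces the singularity to be linearizable resonant), so the closed form is unique up to a constant and has the pole orders of the model. With the corrected dichotomy, together with the remark that a simple pole of a \emph{closed} form necessarily carries a nonzero residue, your induction along the connected configuration $\D\cup\{\text{strict transforms of separatrices}\}$ does go through, non-linearizable resonant points are excluded, and the theorem follows. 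So the architecture is viable, but the key lemma --- the classification $R(f)\,\dd f/f$ of closed forms at a resonant linearizable corner, which rests on connectedness of the fibers of $u^{p}v^{q}$ for $\gcd(p,q)=1$ and a growth estimate showing that $R$ is meromorphic --- must be stated correctly and proved; as written, this step is asserted in a false form.
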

\begin{proof}
Denote by $\pi: (\tilde{M},\D) \to (\C^{2},0)$  the reduction of singularities of $\G$ and, for a component $D \subset \D$,
by $\holv(\tilde{\G},D) \subset \diff(\Sigma,p) \cong  \diff(\C,0)$, where $\tilde{\G} = \pi^{*}\G$,  the virtual holonomy group,
calculated with respect to a one dimensional section $\Sigma$ transverse to $\D$ at $p \in D$. Recall that this group consists of all diffeomorphisms
of $\diff(\Sigma,p)$ that preserve the sets of points of $\Sigma$ lying in the same leaf of $\tilde{\G}$.
We will prove that,    for every component $D \subset \D$, $\holv(\tilde{\G},D)$:
\begin{enumerate}[label=(\roman*)]
  \item   is analytically linearizable;
  \item   contains only rotations;
  \item necessarily contains an irrational rotation.
\end{enumerate}
Once we do that, we get the Theorem's conclusion   from \cite[Prop. 4.4]{mol-fernandez-rosas2020}.

We start with a fixed component $D \subset \D$ containing an elliptic singularity. The assumption of Rolle tangency implies that  the elliptic singularity is linearizable (Proposition \ref{prop-elliptic}) and, hence, it engenders in $\holv(\tilde{\G},D)$ an irrational rotation --- up to choosing an appropriate
complex coordinate in $\Sigma$. Therefore,
 the restriction of the Rolle foliation $\F$ to $\Sigma$ must be the real analytic foliation given by concentric
circles. This implies, in particular, that $\holv(\tilde{\G},D)$ contains no elements tangent to the identity (i.e. non-linearizable maps $\Phi \in \diff(\C,0)$ such that
$\Phi'(0) =1$). Indeed, if such an element existed, its flower structure
would imply that the intersection with $\Sigma$ of  each leaf of  $\F$ would contain infinitely many circles accumulating to $0 \in \C$,
contradicting the Rolle property.
This has two main consequences: first,
all resonant  singularities of $\tilde{\G} = \pi^{*} \G$ on $D$ are
linearizable, since, otherwise, their holonomy maps would be non-linearizable and their iteration would engender holonomy maps
tangent to the identity.
Thus,  using the assumption that  $\G$ is of generalized curve type, and also Propositions \ref{prop-hyperbolic} and \ref{prop-elliptic},
we have that the singularities of $\tilde{\G}$ in $D$ are all linearizable, either resonant or elliptic. Second, $\holv(\tilde{\G},D)$ is abelian, since
the commutator of a  pair of its elements, if non trivial,  would be tangent to the identity.
Hence,  a standard argument proves that $\holv(\tilde{\G},D)$ is analytically linearizable in the coordinate  that linearizes the
irrational rotation.

If $\D = D$, we are finished. Otherwise,   take a different component $D' \subset \D$ intersecting $D$ at a point $p$.
If $\tilde{\G}$ is
linearizable elliptic at $p$ then, evidently, $\holv(\tilde{\G},D')$ contains an irrational rotation.
On the other hand, if  $\tilde{\G}$ is
linearizable resonant at $p$, we can transfer to $\holv(\tilde{\G},D')$
the irrational rotation of  $\holv(\tilde{\G},D)$ by means of the Dulac correspondence (see \cite{scardua2016} for definition).
Hence, we can also conclude that all singularities of $\tilde{\G}$ on $D'$ are linearizable, either resonant or elliptic and that
$\holv(\tilde{\G},D')$ is analytically linearizable.
This argument can be iterated along the exceptional divisor, allowing us to conclude that
all virtual holonomy groups have the desired properties.
\end{proof}

We   observe that, in Theorems \ref{teo-nondic-pfaffian} and  \ref{teo-logarithmic}, the hypothesis of Rolle tangency cannot
be replaced by solely the existence of invariant Rolle-pfaffian hypersurfaces.
The
so-called \emph{logarithmic models} enables the construction of germs of logarithmic foliations in dimension two
with prescribed reduction of singularities and
\emph{Camacho-Sad indices}
 (i.e. the multiplipliers $\lambda$) of its simple singularities,
as long as some axiomatic conditions are fulfilled (see \cite{mol2021} and also \cite{corral2003,canocorral2006}). Thus, for instance,
it is possible to construct a logarithmic foliation $\G$ having simultaneously   nodal singularities ($\lambda \in \R_{+}$) and singularities of hyperbolic type ($\lambda \in \C \setminus \R$). The nodal singularities will produce nodal separators that
go down by the reduction of singularities map to invariant  hypersurfaces which are pfaffian,
since $\G$ is defined by a closed meromorphic $1$-form, and also Rolle, for their separating character.
However, the existence of hyperbolic singularities precludes the existence of a Rolle foliation tangent to $\G$.

\appendix
\section*{Appendix: holonomy of resonant normal singularities}
The holonomy of a resonant  singularity is computed by Martinet and Ramis \cite{martinet1983}. Although the map they give is correct up to linear change of coordinates, we need the exact expression of the holonomy computed in the transversal $\{1\}\times \mathbb C$. For this reason, even though its computation is standard, we opted to present it here.
We start by computing the holonomy of the separatrix $\{v=0\}$ of the singularity defined by the 1-form
$$\eta_k=v(1+(\mu-1)(uv)^k)du+u(1+\mu (uv)^k)dv.$$
This foliation is  the blow-up of the saddle-node given by the 1-form
$$\eta_{\text{sn}}= x(1+\mu y^k)dy-y^{k+1}dx,$$
where the blow-up map is $\pi(u,v)=(u,uv)$. Let $H_k\colon (1,v)\mapsto (1,h_k(v))$ be the holonomy of
 the separatrix $\{v=0\}$ of the foliation $\eta_k=0$  computed in a neighborhood of $(1,0)$ in $\Sigma=\{(1,v)\colon v\in\mathbb C\}$.
 Let  $\tilde H\colon (1,y)\mapsto (1,\tilde h(y))$ be the holonomy map of
 the separatrix $\{y=0\}$ of $\eta_{\text{sn}}=0$ computed in a neighborhood of $(1,0)$ in $\tilde \Sigma=\{(1,y)\colon y\in\mathbb C\}$.
 Then, since $\pi (\Sigma)= \tilde{\Sigma}$, we have that  $H_k=\pi^{-1}\circ \tilde H\circ \pi$, whence $h_k(v)=\tilde h (v)$. In order to compute $\tilde h$ we notice that the foliation $\eta_{\text{sn}}=0$
 is also generated by the vector field $$Z=2\pi ix\frac\partial{\partial x}+\frac{2\pi iy^{k+1}}{1+\mu y^k}  \frac\partial{\partial y}.$$
Then  we have
 $$(1,\tilde h (y))=(\exp Z)(1,y)=\left(1, \exp\left(\frac{2\pi iy^{k+1}}{1+\mu y^k}  \frac\partial{\partial y}\right) (y)\right).$$
 Therefore
 $$h_k=\exp\left(\frac{2\pi i v^{k+1}}{1+\mu v^k}  \frac\partial{\partial v}\right).$$

 Now, we deal with the resonant singularity defined by
  $$\eta=py(1+(\mu-1)(x^py^q)^k)dx
+qx(1+\mu(x^py^q)^k)dy,$$ for which we notice that $\eta=\sigma^*(\eta_{k})$ , where $\sigma(x,y)=(x^p,y^q)$.
 Let  $H\colon (1,y)\mapsto (1,h(y))$ be the holonomy map of
 the separatrix $\{y=0\}$ of $\eta=0$ computed in a neighborhood of $(1,0)$ in $\{(1,y)\colon y\in\mathbb C\}$.
 As above, let $H_k\colon (1,v)\mapsto (1,h_k(v))$ be the holonomy of
 the separatrix $\{v=0\}$ of the foliation $\eta_k=0$  computed in a neighborhood of $(1,0)$ in $\{(1,v)\colon v\in\mathbb C\}$.
 Since the loop $(e^{2\pi i t},0)$, $t\in[0,1]$, is mapped by $\sigma$ onto the loop $(e^{2p\pi i t},0)$, $t\in[0,1]$,  the holonomy maps $H$ and $H_k$ are related by the equation
 $\sigma\circ H=H_k^{\circ p}\circ \sigma$ and, hence,
  $$(h(y))^q=h_k^{\circ p}(y^q)=
 \exp\left(\frac{2p\pi i v^{k+1}}{1+\mu v^k}  \frac\partial{\partial v}\right)(y^q).$$
 Notice that, if  $\Psi(y)=y^q$,
 $$ \Psi^*\left(\frac{2p\pi i v^{k+1}}{1+\mu v^k}  \frac\partial{\partial v}\right)=
 2\pi i(p/q) \frac{ y^{qk+1}}{1+\mu y^{qk}}  \frac\partial{\partial y}. $$
 Then
 $$\exp\left(\frac{2p\pi i v^{k+1}}{1+\mu v^k}  \frac\partial{\partial v}\right)(y^q)=
\left[
 \exp\left(
 2\pi i(p/q) \frac{ y^{qk+1}}{1+\mu y^{qk}}  \frac\partial{\partial y}\right)(y)\right]^q $$
 and, therefore, we obtain that
 $$h= \xi\exp\left(
 2\pi i(p/q) \frac{ y^{qk+1}}{1+\mu y^{qk}}  \frac\partial{\partial y}\right),$$
where $\xi^q=1$. At this point we conclude that $\xi =e^{-2\pi i(p/q)}$, because the Camacho-Sad index of $\eta=0$ along the separatrix $\{y=0\}$ is equal to $-p/q$.

\bibliographystyle{plain}
\bibliography{referencias}

%
%
%
%
%
%

\end{document}